\newtheorem{theorem}{Theorem}[section]
\newtheorem{proposition}[theorem]{Proposition}
\newtheorem{corollary}[theorem]{Corollary}
\newtheorem{lemma}[theorem]{Lemma}
\theoremstyle{definition}
\newtheorem{example}[theorem]{Example}
\numberwithin{equation}{section}
\begin{document}
\title{Compressions of $k^{th}$--order slant Toeplitz operators to model spaces}
\author{Bartosz {\L}anucha, Ma{\l}gorzata Michalska}

\address{
Bartosz {\L}anucha,  \newline Institute of Mathematics,
\newline Maria Curie-Sk{\l}odowska University, \newline pl. M.
Curie-Sk{\l}odowskiej 1, \newline 20-031 Lublin, Poland}
\email{bartosz.lanucha@poczta.umcs.lublin.pl}

\address{
Ma{\l}gorzata Michalska,  \newline Institute of Mathematics,
\newline Maria Curie-Sk{\l}odowska University, \newline pl. M.
Curie-Sk{\l}odowskiej 1, \newline 20-031 Lublin, Poland}
\email{malgorzata.michalska@poczta.umcs.lublin.pl}

\subjclass[2010]{47B32, 47B35, 30H10.}
\keywords{model space, compressed shift, Toeplitz operator, slant
Toeplitz operator, generalized slant Toeplitz operator, truncated Toeplitz operator}
\begin{abstract}
In this paper we consider compressions of $k^{th}$--order slant Toeplitz operators to the backward shift invariant subspaces of the classical Hardy space $H^2$. In particular, we characterize these operators using compressed shifts and finite rank operators of special kind.
\end{abstract}
\maketitle

\baselineskip1.4\baselineskip

\section{Introduction}

Let $\mathbb{T}=\{z:|z|=1\}$ and denote by $L^2=L^2(\mathbb{T})$ the space of all Lebesgue measurable functions on $\mathbb{T}$ with square integrable modulus, and by $L^{\infty}=L^{\infty}(\mathbb{T})$ -- the space of all Lebesgue measurable and essentially bounded functions. Moreover, let $H^2$ be the classical Hardy space in the disk $\mathbb{D}=\{z:|z|<1\}$. As usual, we will view $H^2$ as a space of functions analytic in $\mathbb{D}$ or (via radial limits) as a closed subspace of $L^2$ (see \cite{duren, bros} for details). The orthogonal projection from $L^2$ onto $H^2$ will be denoted by $P$.

For $\varphi\in L^{\infty}$ the classical Toeplitz operator $T_{\varphi}$ is defined on $H^2$ by
  $$T_{\varphi}f=PM_{\varphi} f,\quad f\in H^2,$$
where $M_{\varphi}:L^2\to L^2$ is the multiplication operator $f\mapsto \varphi f$. For $\varphi \in L^2$ the operators $T_{\varphi}$ and $M_{\varphi}$ can be defined on a dense subset of $H^2$ and $L^2$, respectively (namely, on bounded functions).

Classical Toeplitz operators, as compressions of multiplication operators to $H^2$, are among the most studied linear operators on the Hardy space. Their study goes back to the beginning of the 20th century (for details and references see, e.g., \cite{ruben}). In recent years, compressions of multiplication operators to model spaces were widely studied. Model spaces are the non--trivial subspaces of $H^2$ which are invariant for the backward shift operator $S^{*}=T_{\bar z}$. Each of these subspaces is of the form $K_{\alpha}=H^2\ominus \alpha H^2$, where $\alpha$ is an inner function: a function analytic and bounded in $\mathbb{D}$ ($\alpha\in H^{\infty}$) such that its boundary values are of modulus one a.e. on $\mathbb{T}$.

For an inner function $\alpha$ and for $\varphi\in L^2$ let
$$A_{\varphi}^{\alpha}f=P_{\alpha}({\varphi}f),\quad f\in K_{\alpha}^{\infty}=K_{\alpha}\cap H^{\infty},$$
where $P_{\alpha}$ is the orthogonal projection from $L^2$ onto $K_{\alpha}$. Note that $A_{\varphi}^{\alpha}$ is densely defined since $K_{\alpha}^{\infty}$ is a dense subset of $K_{\alpha}$. These operators, called truncated Toeplitz operators, gained attention in 2007 with D. Sarason's paper \cite{s}. Ever since, truncated Toeplitz operators have been under constant intensive study, which led to many interesting results and applications (see \cite{gar3} and references therein). More recently, authors in \cite{part,part2} and \cite{ptak} introduced the so--called asymmetric truncated Toeplitz operators: for two inner functions $\alpha$, $\beta$ and for $\varphi\in L^2$ an asymmetric truncated Toeplitz operator $A_{\varphi}^{\alpha,\beta}$ is defined by
$$A_{\varphi}^{\alpha,\beta}f=P_{\beta}({\varphi}f),\quad f\in K_{\alpha}^{\infty}.$$
As above, $A_{\varphi}^{\alpha,\beta}$ is densely defined. Moreover, $A_{\varphi}^{\alpha,\alpha}=A_{\varphi}^{\alpha}$. These operators were then studied in \cite{BM, blicharz1,blicharz2,BL,BL3}.

Recall that if $\displaystyle{\varphi(z)=\sum_{j=-\infty}^{\infty}a_jz^j\in L^{\infty}}$, then for each $n\in\mathbb{Z}$,  $\displaystyle{M_{\varphi}(z^n)=\sum_{j=-\infty}^{\infty}a_{j-n}z^j}$ and the matrix of $M_{\varphi}$ with respect to the standard basis $\{z^n:\ n\in\mathbb{Z}\}$ is given by
\begin{displaymath}
\left[\begin{array}{cccccc}
\ddots&\vdots&\vdots&\vdots&\vdots&\\
\ldots&a_{0}&a_{-1}&a_{-2}&a_{-3}&\ldots\\
\ldots&a_1&a_{0}&a_{-1}&a_{-2}&\ldots\\
\ldots&a_2&a_{1}&a_{0}&a_{-1}&\ldots\\
\ldots&a_3&a_2&a_{1}&a_{0}&\ldots\\
&\vdots&\vdots&\vdots&\vdots&\ddots
\end{array}\right].
\end{displaymath}
Similarly, for each $n\in\mathbb{N}_0$,  $\displaystyle{T_{\varphi}(z^n)=\sum_{j=0}^{\infty}a_{j-n}z^j}$ and $T_{\varphi}$ is the operator on $H^2$ represented by the matrix (with respect to $\{z^n:\ n\in\mathbb{N}_0\}$)
\begin{displaymath}
\left[\begin{array}{ccccc}
a_{0}&a_{-1}&a_{-2}&a_{-3}&\ldots\\
a_1&a_{0}&a_{-1}&a_{-2}&\ldots\\
a_2&a_{1}&a_{0}&a_{-1}&\ldots\\
a_3&a_2&a_{1}&a_{0}&\ldots\\
\vdots&\vdots&\vdots&\vdots&\ddots\\
\end{array}\right].
\end{displaymath}

In \cite{AB1, AB}, for $k\in\mathbb{N}$, $k\geq 2$, and $\displaystyle{\varphi(z)=\sum_{j=-\infty}^{\infty}a_jz^j\in L^{\infty}}$, the authors define the $k^{th}$--order slant Toeplitz operator $U_{\varphi}$ on $L^2$ by $\displaystyle{U_{\varphi}(z^n)=\sum_{j=-\infty}^{\infty}a_{kj-n}z^j}$ for $n\in\mathbb{Z}$, that is, as the operator on $L^2$ represented by the matrix
\begin{displaymath}
\left[\begin{array}{cccccc}
\ddots&\vdots&\vdots&\vdots&\vdots&\\
\ldots&a_{0}&a_{-1}&a_{-2}&a_{-3}&\ldots\\
\ldots&a_{k}&a_{k-1}&a_{k-2}&a_{k-3}&\ldots\\
\ldots&a_{2k}&a_{2k-1}&a_{2k-2}&a_{2k-3}&\ldots\\
\ldots&a_{3k}&a_{3k-1}&a_{3k-2}&a_{3k-3}&\ldots\\
&\vdots&\vdots&\vdots&\vdots&\ddots
\end{array}\right].
\end{displaymath}
Equivalently, $U_{\varphi}:L^2\to L^2$, $\varphi\in L^{\infty}$, is given by
  $$U_{\varphi}f=W_kM_{\varphi}f,\quad f\in L^2,$$
where
  $$W_k(z^n)=\begin{cases}
               z^m& \text{if }\frac{n}{k}=m\in\mathbb{Z},\\
               0&\text{if } \frac{n}{k}\not\in\mathbb{Z}.
             \end{cases}$$
The authors also consider $V_{\varphi}$, the compression of $U_{\varphi}$ to $H^2$, defined by
  $$V_{\varphi}f=PU_{\varphi}f,\quad f\in H^2,$$
  and represented by the matrix
\begin{displaymath}
\left[\begin{array}{ccccc}
a_{0}&a_{-1}&a_{-2}&a_{-3}&\ldots\\
a_{k}&a_{k-1}&a_{k-2}&a_{k-3}&\ldots\\
a_{2k}&a_{2k-1}&a_{2k-2}&a_{2k-3}&\ldots\\
a_{3k}&a_{3k-1}&a_{3k-2}&a_{3k-3}&\ldots\\
\vdots&\vdots&\vdots&\vdots&\ddots\\
\end{array}\right].
\end{displaymath}
  Note that if $\varphi\in L^2$, then $U_\varphi$ and $V_\varphi$ are densely defined. For $k=2$, operators $U_{\varphi}$ (called slant Toeplitz operators) and their compressions to $H^2$ were first studied in \cite{Ho, ZA} (see also \cite{Ho1,Ho2,Ho3}). These operators have connections with wavelet theory and dynamical systems (see, e.g., \cite{GMW,Ho1,V}). In \cite{LL,LL2} the authors investigate commutativity of $k^{th}$--order slant Toeplitz operators. Observe that if we consider $k=1$ in the above definitions, then $U_{\varphi}=M_{\varphi}$ and $V_{\varphi}=T_{\varphi}$.

Here we study compressions of $k^{th}$--order slant Toeplitz operators to model spaces. For a fixed $k\in\mathbb{N}$, two inner functions $\alpha$, $\beta$ and for $\varphi\in L^2$ define
  $$U_{\varphi}^{\alpha,\beta}f=P_{\beta}U_{\varphi}f=P_{\beta}W_k(\varphi f),\quad f\in K_{\alpha}^{\infty},$$
and denote by $\mathcal{S}_k(\alpha,\beta)$ the set of all the compressions $U_\varphi^{\alpha,\beta}$, $\varphi\in L^2$, which can be boundedly extended to $K_\alpha$.

In Section 2 we investigate conditions on $\varphi$ which imply that $U_{\varphi}^{\alpha,\beta}=0$. We show that, in contrast with the case of $U_{\varphi}$ and $V_{\varphi}$ \cite{AB1,AB}, $U_{\varphi}^{\alpha,\beta}$ can be equal to the zero operator for $\varphi$ not equal to zero.


In Section 3 we characterize operators from  $\mathcal{S}_k(\alpha,\beta)$ using the compressed shifts $S_{\alpha}=A_z^{\alpha}$ and $S_{\beta}=A_z^{\beta}$. It is well known that a bounded linear operator $T:H^2\to H^2$ is a Toeplitz operator if and only if $T-S^* TS=0$, where $S$ is the shift operator $S=T_z$. A similar characterization was given in \cite{AB1,AB} for $V_{\varphi}$. Namely, $T$ is a compression of a $k^{th}$--order slant Toeplitz operator to $H^2$ if and only if $T-S^* TS^k=0$. We show, for example, that a bounded linear operator $U$ from $K_{\alpha}$ into $K_{\beta}$ belongs to $\mathcal{S}_k(\alpha,\beta)$ if and only if $U-S_{\beta}U(S_{\alpha}^*)^k$ is a special kind of operator of rank at most $k+1$. This is done in the spirit of D. Sarason's characterization of truncated Toeplitz operators given in \cite{s}, where, among other results, he shows that $U$ is a truncated Toeplitz operator if and only if $U-S_{\alpha}US_{\alpha}^*$ is of rank two and special kind (see also \cite{ptak, BL, BM} for the asymmetric case).

\section{Operators from $\mathcal{S}_k(\alpha,\beta)$ equal to the zero operator}

In this section we investigate for which $\varphi\in L^2$, $U_{\varphi}^{\alpha,\beta}=0$.

We start with some basic properties of the operator $W_k$ and its adjoint $W_k^*$. Some of these properties can be found for example in \cite{Ho} (for $k=2$) and in \cite{AB1,AB}.
\begin{lemma} Let $k\in\mathbb{N}$. Then
\label{lem_propert_Wk}
\begin{itemize}
  \item[(a)] $W_k^*f(z)=f(z^k)$, $|z|=1$, $f\in L^2$,
  \item[(b)] $W_k^*(f\cdot g)=W_k^*f\cdot W_k^*g$ for all $f,g\in L^2$ such that $f\cdot g\in L^2$,
  \item[(c)] $W_kW_k^*=I_{L^2}$ and $W_k^*W_k$ is the orthogonal projection from $L^2$ onto the closed linear span of $\{z^{km}:m\in\mathbb{Z}\}$,
  \item[(d)] $W_k\overline{f}=\overline{W_k f}$ and $W_k^*\overline{f}=\overline{W_k^*f}$, $f\in L^2$,
  \item[(e)] $P$ reduces $W_k$, that is, $PW_k=W_kP$ and $PW_k^*=W_k^*P$,
  \item[(f)] $M_{\varphi}W_k=W_kM_{W_k^*\varphi}$ for every $\varphi\in L^{\infty}$,
  \item[(g)] for every inner function $\alpha$ the function $W_k^*\alpha$ is also inner and $P_{\alpha}W_k=W_kP_{W_k^*\alpha}$.
\end{itemize}
\end{lemma}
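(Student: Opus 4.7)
The plan is to prove each part by reducing to the action on the orthonormal basis $\{z^n : n \in \mathbb{Z}\}$ and then chaining the earlier parts of the lemma.

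For (a), I would first read off $W_k^* z^m$ by duality: $\langle W_k^* z^m, z^n\rangle = \langle z^m, W_k z^n\rangle$, which is $1$ when $n=km$ and $0$ otherwise, giving $W_k^* z^m = z^{km}$. Extending linearly/continuously yields $(W_k^*f)(z) = f(z^k)$ on $\mathbb{T}$. Part (b) then drops out immediately since $(fg)(z^k) = f(z^k)g(z^k)$, and the identity holds wherever the product $fg$ is in $L^2$. For (c), one computes $W_kW_k^*z^m = W_k z^{km} = z^m$ directly, so $W_kW_k^*=I_{L^2}$; and $W_k^*W_k z^n$ equals $z^n$ precisely when $k\mid n$ and vanishes otherwise, so $W_k^*W_k$ is the stated projection. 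Part (d) reduces to the basis: $W_k\overline{z^n} = W_k z^{-n}$ matches $\overline{W_k z^n}$ case by case (both vanish unless $k\mid n$, in which case both equal $z^{-n/k}$), and analogously for $W_k^*$.

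For (e), it suffices to observe that $W_k$ sends $H^2$ into $H^2$ and $(H^2)^{\perp}$ into $(H^2)^{\perp}$: if $f=\sum_{n\geq 0}a_n z^n\in H^2$, then $W_kf=\sum_{m\geq 0}a_{km}z^m\in H^2$, and symmetrically for the orthogonal complement; the same check works for $W_k^*$ using $W_k^* z^n = z^{kn}$. Hence both $W_k$ and $W_k^*$ commute with $P$. For (f), I would take adjoints: $(M_\varphi W_k)^* = W_k^* M_{\bar\varphi}$, while $(W_k M_{W_k^*\varphi})^* = M_{\overline{W_k^*\varphi}}W_k^* = M_{W_k^*\bar\varphi}W_k^*$ by (d). The required equality becomes $W_k^*(\bar\varphi f) = (W_k^*\bar\varphi)(W_k^* f)$, which is exactly (b).

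For (g), the inner-function claim is immediate: $W_k^*\alpha(z) = \alpha(z^k)$ is analytic in $\mathbb{D}$ because $z\mapsto z^k$ maps $\mathbb{D}$ into $\mathbb{D}$ and $\alpha$ is analytic, it is bounded by $\|\alpha\|_\infty$, and $|\alpha(z^k)|=1$ a.e.\ on $\mathbb{T}$ since $z^k\in\mathbb{T}$ whenever $z\in\mathbb{T}$. For the intertwining, I would use the representation $P_\alpha = P - M_\alpha P M_{\bar\alpha}$, and likewise $P_{W_k^*\alpha} = P - M_{W_k^*\alpha} P M_{W_k^*\bar\alpha}$. Then
\[
W_k P_{W_k^*\alpha} = W_k P - W_k M_{W_k^*\alpha} P M_{W_k^*\bar\alpha} = P W_k - M_\alpha W_k P M_{W_k^*\bar\alpha},
\]
using (e) and (f). Applying (f) once more (with $\bar\alpha$ in place of $\varphi$) to pull $W_k$ past $M_{W_k^*\bar\alpha}$, together with (e), converts the right-hand side to $PW_k - M_\alpha P M_{\bar\alpha} W_k = P_\alpha W_k$.

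The only real subtlety is the bookkeeping in (g): one must apply (f) twice, on either side of the inner projection $P$, and make sure the $W_k$ slides through $P$ via (e) in the right direction. Everything else is a short basis computation or an adjoint manipulation, so the main obstacle is simply keeping the order of the operators straight in that final calculation.
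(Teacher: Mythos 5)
Your proof is correct and takes essentially the same route as the paper: the paper also declares (a)--(e) straightforward basis computations, proves (f) by the inner-product/adjoint reduction $\langle M_{\varphi}W_kf,g\rangle=\langle f,W_k^*(\overline{\varphi}g)\rangle=\langle f,\overline{W_k^*\varphi}\cdot W_k^*g\rangle$ via (b) and (d) (identical to your adjoint manipulation), and proves (g) by exactly your sliding argument through the projection formula. The only cosmetic difference is that you use $P_{\alpha}=P-M_{\alpha}PM_{\overline{\alpha}}$ where the paper writes $P_{\alpha}=I_{L^2}-M_{\alpha}PM_{\overline{\alpha}}$; these differ only on $\overline{H^2_0}$ (yours is in fact the orthogonal projection of $L^2$ onto $K_{\alpha}$), and the computation is the same either way.
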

\begin{proof}
The proofs of (a)--(e) are straightforward. We only prove (f) and (g).




To show (f) fix $\varphi\in L^{\infty}$ and take any $f,g\in L^2$. Then by (b) and (d),
\begin{align*}
  \left\langle M_{\varphi}W_kf,g\right\rangle&= \left\langle f,W_k^*(\overline{\varphi}g)\right\rangle= \left\langle f,\overline{W_k^*\varphi}\cdot W_k^*g\right\rangle= \left\langle W_kM_{W_k^*\varphi}f, g\right\rangle.
\end{align*}

If $\alpha$ is an inner function, then $W_k^*\alpha$ is also an inner function by (a). Since $P_{\alpha}=I_{L^2}-M_{\alpha}PM_{\overline{\alpha}}$, using (e) and (f) we obtain
\begin{align*}
  P_{\alpha}W_k&=W_k-M_{\alpha}PW_kM_{\overline{W_k^*\alpha}}=W_k-M_{\alpha}W_kPM_{\overline{W_k^*\alpha}}
  \\&=W_k(I_{L^2}-M_{W_k^*\alpha}PM_{\overline{W_k^*\alpha}})=W_kP_{W_k^*\alpha}.
\end{align*}
That is, (g) holds.
\end{proof}

	For the reminder of this section fix $k\in\mathbb{N}$.
	
\begin{proposition}
\label{prop_zero_U}
Let $\alpha$ and $\beta$ be two inner functions and let $\varphi\in L^2$. If $\varphi\in \overline{\alpha H^2}+(W_k^*\beta) H^2$, then $U_{\varphi}^{\alpha,\beta}=0$.
\end{proposition}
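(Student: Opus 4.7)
The plan is to split $\varphi = \varphi_1 + \varphi_2$ with $\varphi_1 \in \overline{\alpha H^2}$ and $\varphi_2 \in (W_k^*\beta)H^2$, show each summand produces the zero operator on $K_\alpha^\infty$, and conclude by linearity. Since $K_\beta^\infty$ is dense in $K_\beta$, it suffices to test the pairing against $g \in K_\beta^\infty$. The common starting point is the identity
\[
\langle U_\varphi^{\alpha,\beta} f, g\rangle = \langle P_\beta W_k(\varphi f), g\rangle = \langle W_k(\varphi f), g\rangle = \langle \varphi f, W_k^* g\rangle,
\]
valid for every $f \in K_\alpha^\infty$ and $g \in K_\beta^\infty$, which shifts the problem to an $L^2$--orthogonality computation in which the $W_k$'s have been absorbed into $g$.

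For $\varphi_1 = \overline{\alpha h}$ with $h \in H^2$ the pairing rewrites as $\langle f,\, \alpha\, h\, W_k^* g\rangle$. Here $f \in K_\alpha^\infty$, and by Lemma~\ref{lem_propert_Wk}(a) the function $W_k^* g$ equals $g(z^k)$, which is bounded since $g \in K_\beta^\infty \subset H^\infty$. Consequently $h\, W_k^*g \in H^2$, so $\alpha h W_k^* g \in \alpha H^2$, and the pairing vanishes because $f \perp \alpha H^2$. For $\varphi_2 = (W_k^*\beta)\tilde h$ with $\tilde h \in H^2$, the pairing becomes $\langle (W_k^*\beta)\tilde h f, W_k^* g\rangle$. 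Since $\tilde h f \in H^2$ (as $f$ is bounded), the left factor lies in $(W_k^*\beta)H^2$. On the other hand, Lemma~\ref{lem_propert_Wk}(g) states that $W_k^*\beta$ is inner and $P_\beta W_k = W_k P_{W_k^*\beta}$; taking adjoints gives $W_k^* P_\beta = P_{W_k^*\beta} W_k^*$, so
\[
W_k^* g = W_k^* P_\beta g = P_{W_k^*\beta} W_k^* g \in K_{W_k^*\beta},
\]
which is orthogonal to $(W_k^*\beta)H^2$. Thus the pairing vanishes in this case as well.

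\textbf{Main obstacle.} There is no serious analytic obstacle; the argument is essentially a bookkeeping exercise once Lemma~\ref{lem_propert_Wk} is in hand. The only point requiring attention is the bookkeeping about which products land in $H^2$: the factors $\alpha h W_k^*g$ and $(W_k^*\beta)\tilde h f$ must genuinely lie in $\alpha H^2$ and $(W_k^*\beta)H^2$ respectively, which is why one tests against bounded functions $f \in K_\alpha^\infty$ and $g \in K_\beta^\infty$. The crucial structural input is the transfer formula in Lemma~\ref{lem_propert_Wk}(g), which provides the inner function $W_k^*\beta$ and the intertwining needed to turn the orthogonality $g \perp \beta H^2$ into the orthogonality $W_k^*g \perp (W_k^*\beta) H^2$ that matches the second summand.
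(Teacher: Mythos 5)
Your proof is correct and follows essentially the same route as the paper: the same decomposition $\varphi=\overline{\alpha h_1}+(W_k^*\beta)h_2$, the same reduction to $\langle \varphi f, W_k^* g\rangle$, and the same reliance on Lemma~\ref{lem_propert_Wk}(g). The only (cosmetic) difference is that you take adjoints to get $W_k^*P_\beta=P_{W_k^*\beta}W_k^*$ and push the projection onto $g$, concluding $W_k^*g\in K_{W_k^*\beta}$, whereas the paper applies $P_\beta W_k=W_kP_{W_k^*\beta}$ directly to the left factor $W_k^*\beta\cdot h_2 f$ --- two dual formulations of the identical orthogonality argument.
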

\begin{proof}
Assume that $\varphi=\overline{\alpha h_1}+W_k^*\beta\cdot h_2$ for some $h_1,h_2\in H^2$. Then for all $f\in K_\alpha^\infty$ and $g\in K_\beta^\infty$ we have (by Lemma \ref{lem_propert_Wk}(g)),
\begin{align*}
  \left\langle U^{\alpha,\beta}_\varphi f,g\right\rangle&=\left\langle \varphi f,W^*_k g\right\rangle
  =\left\langle \overline{\alpha h_1} f,W^*_k g\right\rangle +\left\langle W_k^*\beta\cdot h_2 f,W^*_k g\right\rangle\\
  &=\left\langle f,\alpha h_1 \cdot W^*_k g\right\rangle +\left\langle P_\beta W_k(W_k^*\beta\cdot h_2 f),g\right\rangle
  =\left\langle W_kP_{W^*_k\beta}(W_k^*\beta\cdot h_2 f),g\right\rangle=0.
\end{align*}
\end{proof}

\begin{corollary}
If $U=U^{\alpha,\beta}_\varphi$, then $\varphi$ can be chosen from $\overline{K_{\alpha}}+K_{W_k^*\beta}$.
\end{corollary}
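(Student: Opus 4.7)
The plan is to show that any $\varphi\in L^2$ can, modulo the subspace $\overline{\alpha H^2}+(W_k^*\beta)H^2$ appearing in Proposition~\ref{prop_zero_U}, be replaced by a function in $\overline{K_\alpha}+K_{W_k^*\beta}$. Since $\varphi\mapsto U_\varphi^{\alpha,\beta}$ is linear in $\varphi$, Proposition~\ref{prop_zero_U} yields $U^{\alpha,\beta}_\varphi=U^{\alpha,\beta}_{\varphi'}$ whenever $\varphi-\varphi'\in \overline{\alpha H^2}+(W_k^*\beta)H^2$, and the corollary will follow.

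To construct $\varphi'$, I would use two standard orthogonal decompositions. First, split $\varphi=\varphi_-+\varphi_+$ according to $L^2=\overline{H^2_0}\oplus H^2$ (where $H^2_0=zH^2$). Next, since $W_k^*\beta$ is inner by Lemma~\ref{lem_propert_Wk}(g), we have $H^2=K_{W_k^*\beta}\oplus (W_k^*\beta)H^2$, so write
\begin{equation*}
\varphi_+=\psi_++(W_k^*\beta)h_2,\qquad \psi_+\in K_{W_k^*\beta},\ h_2\in H^2.
\end{equation*}
For the antianalytic part, note that $\overline{\varphi_-}\in H^2$, so using $H^2=K_\alpha\oplus \alpha H^2$ write $\overline{\varphi_-}=\chi+\alpha h_1$ with $\chi\in K_\alpha$ and $h_1\in H^2$; taking conjugates gives $\varphi_-=\overline{\chi}+\overline{\alpha h_1}$ with $\overline{\chi}\in \overline{K_\alpha}$.

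Combining these,
\begin{equation*}
\varphi=\bigl(\overline{\chi}+\psi_+\bigr)+\bigl(\overline{\alpha h_1}+(W_k^*\beta)h_2\bigr),
\end{equation*}
where the first summand lies in $\overline{K_\alpha}+K_{W_k^*\beta}$ and the second in $\overline{\alpha H^2}+(W_k^*\beta)H^2$. Setting $\varphi'=\overline{\chi}+\psi_+$ and applying Proposition~\ref{prop_zero_U} to $\varphi-\varphi'$ gives $U=U_\varphi^{\alpha,\beta}=U_{\varphi'}^{\alpha,\beta}$ with $\varphi'\in \overline{K_\alpha}+K_{W_k^*\beta}$, as required.

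There is no real obstacle here; the argument is pure bookkeeping with orthogonal decompositions. The only point to keep an eye on is the mild asymmetry: although $\varphi_-\in \overline{H^2_0}$, the decomposition of $\overline{\varphi_-}$ inside $H^2$ need not respect the condition of vanishing at $0$, and the resulting $\chi$ lies in $K_\alpha$ rather than in $K_\alpha\cap H^2_0$. This is harmless since we only need $\overline{\chi}\in \overline{K_\alpha}$, and $\overline{K_\alpha}+K_{W_k^*\beta}$ is not required to be a direct sum.
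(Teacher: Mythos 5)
Your proof is correct and follows exactly the argument the paper intends: the corollary is stated there without proof as an immediate consequence of Proposition~\ref{prop_zero_U}, obtained precisely by your decompositions $L^2=\overline{H^2_0}\oplus H^2$, $H^2=K_{W_k^*\beta}\oplus (W_k^*\beta)H^2$ and $\overline{H^2}=\overline{K_\alpha}\oplus\overline{\alpha H^2}$ together with linearity of $\varphi\mapsto U_\varphi^{\alpha,\beta}$. Your closing remark about $\chi$ not needing to vanish at $0$ and the sum not being direct is accurate and harmless, as you say.
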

		
Hence $U^{\alpha,\beta}_\varphi$ is not uniquely determined by its symbol. Moreover, boundedness of the symbol is not necessary for the boundedness of $U_\varphi^{\alpha,\beta}$.

In general, the implication in Proposition \ref{prop_zero_U} cannot be reversed as the following example shows.

\begin{example}\label{ex1}
Let $k=2$, $\alpha(z)=z^4$ and $\beta(z)=z^3$. In that case, for $\varphi=\sum_{n=-\infty}^{\infty}a_nz^n\in L^2$, the operator $U_{\varphi}^{\alpha,\beta}$ is represented by the matrix
$$\left[\begin{array}{cccc}a_0&a_{-1}&a_{-2}&a_{-3}\\
a_{2}&a_{1}&a_{0}&a_{-1}\\
a_{4}&a_{3}&a_{2}&a_{1}\end{array}\right].$$
Hence, if for example $\varphi(z)=z^5$, then $U_{\varphi}^{\alpha,\beta}=0$. But here $$z^5\notin \overline{\alpha H^2}+(W_k^*\beta) H^2=\overline{z^4 H^2}+z^6 H^2.$$
\end{example}		

Observe that $U^{\alpha,\beta}_\varphi$ can be seen as a composition of $W_k$ and an asymmetric truncated Toeplitz operator. Indeed, using Lemma \ref{lem_propert_Wk}(g), for $f\in K_{\alpha}^{\infty}$ we get
$$U_{\varphi}^{\alpha,\beta}f=P_{\beta}W_k({\varphi}f)=W_kP_{W_k^*\beta}({\varphi}f)=W_kA_{\varphi}^{\alpha,W_k^*\beta}f.$$ Now Proposition \ref{prop_zero_U} follows from the fact that $A_{\varphi}^{\alpha,\beta}=0$ if and only if $\varphi\in \overline{\alpha H^2}+\beta H^2$ (see \cite{ptak, blicharz1}).

Let $\alpha$ be an inner function. Then the operator $C_{\alpha}:L^2\to L^2$, defined by the formula
\begin{align*}
C_{\alpha}f(z)=\alpha(z)\overline{z}\overline{f(z)},\quad |z|=1,
\end{align*}
is a conjugation on $L^2$ (an antilinear, isometric involution). Moreover, $C_{\alpha}(K_{\alpha})=K_{\alpha}$ (see \cite{gp} and \cite[Chapter 8]{bros}). 
It is known that all truncated Toeplitz operators are  $C_{\alpha}$--symmetric, that is, $ C_{\alpha}A_{\varphi}^{\alpha}C_{\alpha}=(A_{\varphi}^{\alpha})^*=A_{\overline{\varphi}}^{\alpha}$ \cite{s}. It was observed in \cite{BL} that $ C_{\beta}A_{\varphi}^{\alpha,\beta}C_{\alpha}=A_{\overline{\alpha\varphi}\beta}^{\alpha,\beta}$ (see also \cite{BM}). Here we have
\begin{proposition}
\label{lem_cong_U}
Let $\alpha$, $\beta$ be two inner functions and let $\varphi\in L^2$. Then
  $$C_\beta U_\varphi^{\alpha,\beta} C_\alpha=U_\psi^{\alpha,\beta}$$
with $\psi=\overline{z^{k-1}\alpha\varphi} W_k^* \beta$.
\end{proposition}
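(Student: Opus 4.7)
The plan is to verify the operator identity by computing matrix entries, i.e., to show
\[
\langle C_\beta U_\varphi^{\alpha,\beta} C_\alpha f, g\rangle = \langle U_\psi^{\alpha,\beta} f, g\rangle
\]
for every $f\in K_\alpha^\infty$ and every $g\in K_\beta^\infty$. Since $K_\beta^\infty$ is dense in $K_\beta$ and $C_\alpha(K_\alpha^\infty)\subseteq K_\alpha^\infty$ (because $|\alpha|=|z|=1$ on $\mathbb{T}$), this will establish the identity on the common domain $K_\alpha^\infty$. A preliminary observation is that $\overline{z^{k-1}\alpha}\cdot W_k^*\beta$ is unimodular, so $\psi\in L^2$ and the right-hand side is well-defined.

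The key step is to unwind the left-hand side. Using the antilinear isometric involution property $\langle C_\beta u, g\rangle = \langle C_\beta g, u\rangle$ with $u = U_\varphi^{\alpha,\beta}C_\alpha f$, and the fact that $C_\beta g\in K_\beta$ absorbs the projection $P_\beta$, one obtains
\[
\langle C_\beta U_\varphi^{\alpha,\beta} C_\alpha f, g\rangle
=\langle C_\beta g, W_k(\varphi\cdot C_\alpha f)\rangle
=\langle W_k^{*}C_\beta g,\;\varphi\cdot C_\alpha f\rangle.
\]
Now the main computation is to rewrite $W_k^{*}C_\beta g = W_k^{*}(\beta\bar z\bar g)$ using parts (a), (b) and (d) of Lemma~\ref{lem_propert_Wk}: multiplicativity of $W_k^*$ together with its commutation with complex conjugation give
\[
W_k^{*}C_\beta g = (W_k^*\beta)\cdot \bar z^{k}\cdot \overline{W_k^* g}.
\]

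Substituting this and $C_\alpha f=\alpha\bar z\bar f$, and grouping all of the unimodular factors into the conjugate slot, the inner product collapses to
\[
\int_{\mathbb{T}} f\cdot\bigl(\overline{z^{k-1}\alpha\varphi}\cdot W_k^*\beta\bigr)\cdot \overline{W_k^{*}g}\;dm
=\langle \psi f, W_k^* g\rangle
=\langle W_k(\psi f),g\rangle
=\langle U_\psi^{\alpha,\beta} f, g\rangle,
\]
where in the last equality $g\in K_\beta$ lets us reinstate $P_\beta$. This identifies $\psi=\overline{z^{k-1}\alpha\varphi}\cdot W_k^*\beta$ exactly as claimed.

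I do not expect a conceptual obstacle; the only care required is bookkeeping with the several conjugations and with the unimodular factors. A more structural alternative would be to exploit the factorization $U_\varphi^{\alpha,\beta}=W_k A_\varphi^{\alpha,W_k^*\beta}$ together with the asymmetric formula $C_{W_k^*\beta}A_\varphi^{\alpha,W_k^*\beta}C_\alpha = A_{\overline{\alpha\varphi}\cdot W_k^*\beta}^{\alpha,W_k^*\beta}$ recalled just before the proposition; but this route then requires intertwining $C_\beta W_k$ with $W_k C_{W_k^*\beta}$ on $K_{W_k^*\beta}$, which produces the missing factor $\bar z^{k-1}$ by a separate computation of comparable length. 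The direct computation above is more transparent.
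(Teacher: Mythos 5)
Your proof is correct and takes essentially the same route as the paper's: both verify the identity on inner products $\left\langle \cdot\, f,g\right\rangle$ for $f\in K_\alpha^\infty$, $g\in K_\beta^\infty$, moving $C_\beta$ across the pairing, absorbing $P_\beta$ into $C_\beta g\in K_\beta$, writing $C_\alpha f=\overline{z}\alpha\overline{f}$, and using Lemma~\ref{lem_propert_Wk}(a),(b),(d) to compute $W_k^*(\overline{z}\beta\overline{g})=\overline{z}^k(W_k^*\beta)\cdot\overline{W_k^*g}$ before regrouping the unimodular factors and reinstating $P_\beta$. No gaps; nothing further is needed.
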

\begin{proof}
Let $\varphi\in L^2$. Then for $f\in K_\alpha^\infty$, $g\in K_\beta^\infty$ we have, by Lemma \ref{lem_propert_Wk},
\begin{align*}
  \left\langle C_\beta U_\varphi^{\alpha,\beta} C_\alpha f,g \right\rangle&=\left\langle C_\beta P_\beta W_k M_\varphi C_\alpha f,g \right\rangle
  =\left\langle C_\beta g, P_\beta W_k (\varphi C_\alpha f) \right\rangle=\left\langle C_\beta g, W_k (\varphi C_\alpha f) \right\rangle\\
  &=\left\langle \overline{z}\beta \overline{g}, W_k (\varphi \overline{z}\alpha \overline{f}) \right\rangle
  =\left\langle W_k^*(\overline{z}\beta \overline{g}), \varphi \overline{z}\alpha \overline{f} \right\rangle
  =\left\langle \overline{z^{k-1}\alpha\varphi} W_k^*\beta \cdot f,W_k^* g \right\rangle\\
  &=\left\langle P_\beta W_k M_{\overline{z^{k-1}\alpha\varphi} W_k^*\beta}f,g \right\rangle
  =\left\langle U_{\overline{z^{k-1}\alpha\varphi} W_k^*\beta}^{\alpha,\beta}f,g \right\rangle.
\end{align*}
\end{proof}
	
As a corollary to Lemma \ref{lem_cong_U} we get the following.

\begin{corollary}
\label{cor_cong_slant_k}
Let $\alpha$, $\beta$ be two inner functions and let $U$ be a bounded linear operator from $K_\alpha$ into $K_\beta$. Then $U\in\mathcal{S}_k(\alpha,\beta)$ if and only if $C_\beta UC_\alpha\in\mathcal{S}_k(\alpha,\beta)$.
\end{corollary}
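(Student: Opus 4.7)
The plan is to reduce everything to Proposition \ref{lem_cong_U}. Suppose first that $U\in\mathcal{S}_k(\alpha,\beta)$. Then by definition there exists a symbol $\varphi\in L^2$ such that $U$ is the bounded extension of $U_\varphi^{\alpha,\beta}$. Proposition \ref{lem_cong_U} produces a candidate symbol $\psi=\overline{z^{k-1}\alpha\varphi}\,W_k^*\beta$ for the twisted operator $C_\beta U C_\alpha$. I would first verify that $\psi$ is an admissible symbol, i.e.\ $\psi\in L^2$. This follows because $z^{k-1}$, $\alpha$ and $W_k^*\beta$ are unimodular on $\mathbb{T}$ (the last by Lemma \ref{lem_propert_Wk}(g)), so $|\psi|=|\varphi|$ a.e.\ on $\mathbb{T}$, and therefore $\|\psi\|_2=\|\varphi\|_2<\infty$.

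Next I would check that $C_\beta U C_\alpha$ is bounded and agrees with the bounded extension of $U_\psi^{\alpha,\beta}$. Boundedness is immediate since $C_\alpha$ and $C_\beta$ are isometric involutions of $L^2$ that preserve $K_\alpha$ and $K_\beta$ respectively, so $\|C_\beta U C_\alpha\|=\|U\|$. On the dense subspace $K_\alpha^\infty$ Proposition \ref{lem_cong_U} gives $C_\beta U C_\alpha f=U_\psi^{\alpha,\beta}f$, so by density $C_\beta U C_\alpha$ is precisely the bounded extension of $U_\psi^{\alpha,\beta}$. This shows $C_\beta U C_\alpha\in \mathcal{S}_k(\alpha,\beta)$.

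For the converse direction I would invoke the involution property $C_\alpha^2=I_{L^2}$ and $C_\beta^2=I_{L^2}$. If $V:=C_\beta U C_\alpha\in\mathcal{S}_k(\alpha,\beta)$, then applying the previous direction to $V$ yields $C_\beta V C_\alpha\in\mathcal{S}_k(\alpha,\beta)$. But
\begin{equation*}
C_\beta V C_\alpha=C_\beta(C_\beta U C_\alpha)C_\alpha=(C_\beta^2)U(C_\alpha^2)=U,
\end{equation*}
so $U\in\mathcal{S}_k(\alpha,\beta)$.

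There is no real obstacle here; the only mildly delicate point is checking that $\psi$ actually lies in $L^2$, which is why I would treat that step explicitly. Everything else reduces to algebraic manipulation with the conjugations and an appeal to the previous proposition.
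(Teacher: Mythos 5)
Your proof is correct and takes essentially the same approach the paper intends: the corollary is stated there without a written proof, precisely because it follows at once from Proposition \ref{lem_cong_U} together with the fact that $C_\alpha$ and $C_\beta$ are isometric involutions preserving $K_\alpha$ and $K_\beta$, plus the involution trick $C_\beta(C_\beta U C_\alpha)C_\alpha=U$ for the converse. Your explicit verifications --- that $|\psi|=|\varphi|$ a.e.\ on $\mathbb{T}$ (so $\psi\in L^2$) and the density argument identifying $C_\beta U C_\alpha$ with the bounded extension of $U_\psi^{\alpha,\beta}$ --- merely fill in routine details the authors left implicit.
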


\begin{proposition}
\label{prop_zero_U_bis}
Let $\alpha$, $\beta$ be two inner functions and let $\varphi\in L^2$. If $\varphi\in \overline{\alpha H^2}+ \overline{z}^{k-1}(W_k^*\beta )H^2$, then $U_{\varphi}^{\alpha,\beta}=0$.
\end{proposition}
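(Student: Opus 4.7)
My plan is to reduce the claim to Proposition \ref{prop_zero_U} via the conjugation machinery that has just been set up. Writing $\varphi = \overline{\alpha h_1} + \overline{z}^{k-1}(W_k^*\beta) h_2$ for some $h_1, h_2 \in H^2$, linearity of the map $\varphi \mapsto U_\varphi^{\alpha,\beta}$ lets me treat the two summands separately. The summand $\overline{\alpha h_1}$ already lies in $\overline{\alpha H^2} + (W_k^*\beta) H^2$, so Proposition \ref{prop_zero_U} immediately gives $U_{\overline{\alpha h_1}}^{\alpha,\beta} = 0$.

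For the second summand $\varphi_2 := \overline{z}^{k-1}(W_k^*\beta) h_2$ I would invoke Corollary \ref{cor_cong_slant_k}: the operator $U_{\varphi_2}^{\alpha,\beta}$ is zero if and only if its $(C_\beta, C_\alpha)$-conjugate $C_\beta U_{\varphi_2}^{\alpha,\beta} C_\alpha$ is. By Proposition \ref{lem_cong_U} the latter equals $U_\psi^{\alpha,\beta}$ with
\[
\psi = \overline{z^{k-1}\alpha\varphi_2}\,W_k^*\beta = \overline{z^{k-1}\alpha\cdot\overline{z}^{k-1}(W_k^*\beta) h_2}\,W_k^*\beta.
\]
Using on $\mathbb{T}$ the two modulus-one identities $z^{k-1}\overline{z}^{k-1}=1$ and $\overline{W_k^*\beta}\cdot W_k^*\beta = |W_k^*\beta|^2 = 1$ (the latter because $W_k^*\beta$ is inner, by Lemma \ref{lem_propert_Wk}(g)), this collapses to $\psi = \overline{\alpha h_2} \in \overline{\alpha H^2}$. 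A second application of Proposition \ref{prop_zero_U} then yields $U_\psi^{\alpha,\beta} = 0$, and unwinding the conjugation gives $U_{\varphi_2}^{\alpha,\beta} = 0$, as required.

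The argument presents no real obstacle; it is essentially a two-line symbolic manipulation hinging on the two modulus-one identities above. The conceptual content is that the symbol map $\varphi \mapsto \overline{z^{k-1}\alpha\varphi}\,W_k^*\beta$ induced by the $(C_\beta, C_\alpha)$-conjugation sends $\overline{z}^{k-1}(W_k^*\beta) H^2$ onto $\overline{\alpha H^2}$, so the second piece of the symbol space in Proposition \ref{prop_zero_U_bis} is precisely the conjugate image of the first piece in Proposition \ref{prop_zero_U}, and the two vanishing results fit together to yield the announced one. The only care point is that the simplifying cancellations are pointwise identities on $\mathbb{T}$, not on $\mathbb{D}$.
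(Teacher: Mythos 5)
Your proof is correct and is essentially identical to the paper's own argument: both split off $\overline{\alpha h_1}$ via Proposition \ref{prop_zero_U}, then apply the conjugation formula of Proposition \ref{lem_cong_U} to the remaining summand and use the pointwise identities $z^{k-1}\overline{z}^{k-1}=1$ and $|W_k^*\beta|=1$ a.e.\ on $\mathbb{T}$ to collapse the new symbol to $\overline{\alpha h_2}$, after which Proposition \ref{prop_zero_U} applies again. The only cosmetic difference is that the paper first replaces $U_\varphi^{\alpha,\beta}$ by $U_{\overline{z}^{k-1}W_k^*\beta\cdot h_2}^{\alpha,\beta}$ and conjugates that single operator, whereas you conjugate the second summand separately; the cancellation and the unwinding via the involutivity of $C_\alpha$, $C_\beta$ are the same.
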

\begin{proof}
Let $\varphi=\overline{\alpha h_1}+\overline{z}^{k-1}W_k^*\beta\cdot h_2 $ with $h_1,h_2\in H^2$. By Proposition \ref{prop_zero_U} we have
$U^{\alpha,\beta}_{\overline{\alpha h_1}}=0$ and, consequently,
  $$U_\varphi^{\alpha,\beta}=U^{\alpha,\beta}_{\overline{z}^{k-1} W_k^*\beta\cdot h_2}.$$
Now, using Lemma \ref{lem_cong_U}, we obtain
  $$C_\beta U_\varphi^{\alpha,\beta} C_\alpha=C_\beta U_{\overline{z}^{k-1} W_k^*\beta\cdot h_2}^{\alpha,\beta} C_\alpha=U_{\overline{z^{k-1}\alpha\overline{z}^{k-1} W_k^*\beta\cdot h_2}\cdot W_k^*\beta}^{\alpha,\beta}=U^{\alpha,\beta}_{\overline{\alpha h_2}}=0$$
	and it follows that $U_\varphi^{\alpha,\beta}=0$.
\end{proof}

\begin{corollary}
If $U=U^{\alpha,\beta}_\varphi$, then $\varphi$ can be chosen from $\overline{K_{\alpha}}+\overline{z}^{k-1}K_{W_k^*\beta}$.
\end{corollary}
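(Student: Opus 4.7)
The plan is to reduce to Proposition \ref{prop_zero_U_bis}: if I can decompose any $\varphi\in L^2$ as $\varphi=\psi+r$ with $\psi\in \overline{K_{\alpha}}+\overline{z}^{k-1}K_{W_k^*\beta}$ and $r\in \overline{\alpha H^2}+\overline{z}^{k-1}(W_k^*\beta)H^2$, then the linearity of $\varphi\mapsto U_\varphi^{\alpha,\beta}$ together with Proposition \ref{prop_zero_U_bis} gives $U_\varphi^{\alpha,\beta}=U_\psi^{\alpha,\beta}$, which is exactly what the corollary asks for. So the entire task is to produce such a splitting of $L^2$.

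To build the decomposition I would first split $\varphi$ according to the orthogonal decomposition $L^2=H^2\oplus\overline{zH^2}$, writing $\varphi=P\varphi+(I-P)\varphi$. The co-analytic part $(I-P)\varphi$ lies in $\overline{H^2}=\overline{K_{\alpha}}+\overline{\alpha H^2}$ (obtained by conjugating the standard splitting $H^2=K_{\alpha}\oplus\alpha H^2$), so it already contributes a $\overline{K_{\alpha}}$-summand plus a term absorbed by the removable part $\overline{\alpha H^2}$. For the analytic part $P\varphi\in H^2$, I would observe that $z^{k-1}P\varphi$ still lies in $H^2$ and apply the orthogonal decomposition $H^2=K_{W_k^*\beta}\oplus (W_k^*\beta)H^2$, which is legitimate because $W_k^*\beta$ is inner by Lemma \ref{lem_propert_Wk}(g). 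Writing $z^{k-1}P\varphi=\psi_2'+(W_k^*\beta)h_2$ with $\psi_2'\in K_{W_k^*\beta}$ and $h_2\in H^2$ and then multiplying by $\overline{z}^{k-1}$ places the first piece in $\overline{z}^{k-1}K_{W_k^*\beta}$ and the second in $\overline{z}^{k-1}(W_k^*\beta)H^2$. Assembling the four summands yields exactly the decomposition $\varphi=\psi+r$ required above.

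The argument is essentially mechanical; there is no genuine obstacle, only the minor bookkeeping point of observing that multiplication by $z^{k-1}$ followed by $\overline{z}^{k-1}$ is well defined and isometric on $L^2(\mathbb{T})$ because $|z|=1$ on $\mathbb{T}$. Once the splitting is in hand, invoking Proposition \ref{prop_zero_U_bis} on the remainder $r$ closes the proof in one line.
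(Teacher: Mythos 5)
Your proof is correct and is precisely the argument the paper leaves implicit: the corollary appears without proof right after Proposition \ref{prop_zero_U_bis}, and the intended justification is exactly your splitting $L^2=\left(\overline{K_{\alpha}}+\overline{z}^{k-1}K_{W_k^*\beta}\right)+\left(\overline{\alpha H^2}+\overline{z}^{k-1}(W_k^*\beta)H^2\right)$, obtained from $L^2=\overline{H^2}+H^2$, the conjugated decomposition $\overline{H^2}=\overline{K_{\alpha}}+\overline{\alpha H^2}$, and applying $H^2=K_{W_k^*\beta}\oplus(W_k^*\beta)H^2$ (legitimate since $W_k^*\beta$ is inner by Lemma \ref{lem_propert_Wk}) to $z^{k-1}P\varphi$ before multiplying back by $\overline{z}^{k-1}$, after which Proposition \ref{prop_zero_U_bis} kills the second summand by linearity of $\varphi\mapsto U_{\varphi}^{\alpha,\beta}$ on $K_{\alpha}^{\infty}$. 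Nothing to fix.
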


Let us note that if an inner function $\beta$ is such that $\beta(0)=0$, then $z^k$ divides $W_k^*\beta=\beta(z^k)$ and
 $\overline{z}^{k-1}W_k^* \beta$ is also an inner function. Moreover,
$$\overline{\alpha H^2}+(W_k^*\beta) H^2= \overline{\alpha H^2}+\overline{z}^{k-1}(W_k^*\beta){z}^{k-1} H^2\subset\overline{\alpha H^2}+\overline{z}^{k-1}(W_k^*\beta) H^2.$$

\section{Characterizations of operators from $\mathcal{S}_k(\alpha,\beta)$}

Let $\alpha$ and $\beta$ be two inner functions. In this section we will characterize operators from $\mathcal{S}_k(\alpha,\beta)$, $k\in\mathbb{N}$, using compressed shifts and finite rank operators of special kind. Recall that the compressed shift $S_{\alpha}$ is defined as $S_{\alpha}=A_z^{\alpha}=P_{\alpha}S_{|K_{\alpha}}$. As $K_{\alpha}$ is $S^*$--invariant, we have $S_{\alpha}^*=A_{\overline{z}}^{\alpha}=S^*_{|K_{\alpha}}$.

For each $n\in \mathbb{N}_0$ and $w\in\mathbb{D}$ the functional $f\mapsto f^{(n)}(w)$ is bounded on $H^2$ and so there exists $k_{w,n}\in H^2$ such that $f^{(n)}(w)=\langle f,k_{w,n}\rangle$. It is not difficult to verify that $k_{w,n}(z)=\frac{n!z^n}{(1-\overline{w}z)^{n+1}}$ and, in particular, $k_{0,n}(z)=n!z^n$. Denote $k_{w}=k_{w,0}$.

\begin{lemma}
\label{lem_aux_3_1}
Let $k\in \mathbb{N}$. For $f\in H^2$ we have
\begin{itemize}
  \item[(a)] $\displaystyle{(S^*)^kf(z)=\overline{z}^kf(z)-\sum_{j=0}^{k-1}\frac{\langle f,k_{0,j}\rangle}{j!}\overline{z}^{k-j}}$, $|z|=1$,
  \item[(b)] $W_k^*f-z^kW_k^*S^*f=\langle f,k_{0}\rangle=f(0)$.
\end{itemize}
\end{lemma}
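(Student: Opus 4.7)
Both parts are straightforward computations with Taylor coefficients; the plan is to expand $f\in H^2$ as $f(z)=\sum_{n=0}^{\infty}a_n z^n$ with $a_n=f^{(n)}(0)/n!=\langle f,k_{0,n}\rangle/n!$ and check the identities directly.

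For (a), I would first recall that the backward shift acts on power series by
$$S^*f(z)=\tfrac{f(z)-f(0)}{z}=\sum_{n=0}^{\infty}a_{n+1}z^n,$$
so by induction $(S^*)^k f(z)=\sum_{n=0}^{\infty}a_{n+k}z^n$. Next, working on $|z|=1$ (where $\bar z=z^{-1}$), I would split
$$\bar z^k f(z)=z^{-k}\sum_{n=0}^{\infty}a_n z^n=\sum_{j=0}^{k-1}a_j \bar z^{\,k-j}+\sum_{n=k}^{\infty}a_n z^{n-k}=\sum_{j=0}^{k-1}a_j \bar z^{\,k-j}+(S^*)^k f(z).$$
Rearranging and substituting $a_j=\langle f,k_{0,j}\rangle/j!$ gives (a).

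For (b), using $W_k^*f(z)=f(z^k)=\sum_{n=0}^{\infty}a_n z^{kn}$ together with $W_k^*S^*f(z)=\sum_{n=0}^{\infty}a_{n+1}z^{kn}$, I would multiply the second series by $z^k$ to obtain
$$z^k W_k^*S^*f(z)=\sum_{n=0}^{\infty}a_{n+1}z^{k(n+1)}=\sum_{m=1}^{\infty}a_m z^{km}=W_k^*f(z)-a_0,$$
so that $W_k^*f-z^k W_k^*S^*f=a_0=f(0)=\langle f,k_0\rangle$, which is (b).

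There isn't really a conceptual obstacle here; the only point requiring some care is that the expression $\bar z^k f(z)$ in (a) must be read as an identity of boundary functions in $L^2(\mathbb{T})$ (since $\bar z^k f$ is not holomorphic on $\mathbb{D}$), which is exactly what the hypothesis $|z|=1$ signals. Alternatively, for (a) one could argue by induction on $k$, using the base case $S^*f(z)=\bar z f(z)-\bar z f(0)$ on $\mathbb{T}$ and the identity $\langle S^*f,k_{0,j}\rangle=\langle f,k_{0,j+1}\rangle/(j+1)$, but the direct Taylor-coefficient computation above seems cleaner.
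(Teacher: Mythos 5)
Your proof is correct and follows essentially the same route as the paper: both arguments are direct Taylor-coefficient computations, with (a) amounting to splitting $\overline{z}^k f$ on $\mathbb{T}$ into its analytic part $(S^*)^k f = P(\overline{z}^k f)$ plus the $k$ co-analytic terms, and (b) following from $S^*f=\overline{z}(f-f(0))$ together with $W_k^*f(z)=f(z^k)$. The only cosmetic difference is that the paper applies the Riesz projection to $\overline{z}^kf$ directly and verifies (b) without expanding in series, but the content is identical.
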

\begin{proof}
Let $k\in \mathbb{N}$ and $f\in H^2$. Then $(S^*)^kf=(T_{\overline z})^kf=P({\overline z}^kf)$. Since $f(z)=\sum_{j=0}^{\infty}\frac{\langle f,k_{0,j}\rangle}{j!}z^j$, we have, for $|z|=1$,
$$
  (S^*)^{k}f(z)=P\left( \sum_{j=0}^{\infty}\frac{\langle f,k_{0,j}\rangle}{j!}z^{j-k}\right)=\sum_{j=k}^{\infty}\frac{\langle f,k_{0,j}\rangle}{j!}z^{j-k}=\overline{z}^kf(z)-\sum_{j=0}^{k-1}\frac{\langle f,k_{0,j}\rangle}{j!}\overline{z}^{k-j}.$$

Now, observe that
  $$W_k^*f(z)-z^kW_k^*S^*f(z)=f(z^k)-z^kW_k^* (\overline{z}f(z)-\overline{z}f(0))=f(z^k)- \left(f(z^k)- f(0)\right)= f(0),$$
which proves (b).
\end{proof}

	As $K_{\alpha}$ is a closed subspace of $H^2$, $f\mapsto f^{(n)}(w)$ is also bounded on $K_{\alpha}$ for each $n\in \mathbb{N}_0$ and $w\in\mathbb{D}$. Then $f^{(n)}(w)=\langle f,k_{w,n}^{\alpha}\rangle$ for all $f\in K_{\alpha}$ and $k_{w,n}^{\alpha}=P_{\alpha}k_{w,n}$ (see, e.g., \cite[p. 204]{fm}). Denote $\widetilde{k}_{w,n}^{\alpha}=C_{\alpha}{k}_{w,n}^{\alpha}$. In particular,
	$$ k_{w}^{\alpha}(z)=P_{\alpha}k_{w}(z)=\frac{1-\overline{\alpha(w)}\alpha(z)}{1-\overline{w}z}\quad\text{and}\quad \widetilde{k}_{w}^{\alpha}(z)=\frac{\alpha(z)-\alpha(w)}{z-w}.$$
	
\begin{lemma}
\label{lem_aux_3_2}
Let $k\in \mathbb{N}$ and $\varphi\in L^2$. Then
  $$U_{\varphi}^{\alpha,\beta}-S_{\beta}U_{\varphi}^{\alpha,\beta}(S_{\alpha}^*)^k=k_0^{\beta}\otimes \chi+\sum_{j=0}^{k-1}\psi_j\otimes k_{0,j}^{\alpha},$$
where the equality holds on $K_{\alpha}^{\infty}$ with
  $$\chi=P_{\alpha}(\overline{\varphi})\quad\text{and}\quad \psi_j=\tfrac1{j!}S_{\beta}P_{\beta}W_k(\varphi \overline{z}^{k-j}),\ 0\leq j\leq k-1.$$
\end{lemma}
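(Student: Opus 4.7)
The plan is to test the claimed identity against an arbitrary $g\in K_\beta^\infty$, convert every operator to its $L^2$ form, and then extract the two explicit rank-one contributions by using parts (a) and (b) of Lemma~\ref{lem_aux_3_1}. Concretely, for $f\in K_\alpha^\infty$ and $g\in K_\beta^\infty$ I would compute
\begin{equation*}
\bigl\langle U_\varphi^{\alpha,\beta}f-S_\beta U_\varphi^{\alpha,\beta}(S_\alpha^*)^k f,\,g\bigr\rangle
=\bigl\langle \varphi f,W_k^*g\bigr\rangle-\bigl\langle \varphi (S^*)^k f,\,W_k^*S^*g\bigr\rangle,
\end{equation*}
using $S_\alpha^*=S^*|_{K_\alpha}$, $S_\beta^*g=S^*g\in K_\beta$, and the self-adjointness of $P_\beta$ together with $P_\beta S^*g=S^*g$.

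Next I would expand $(S^*)^kf$ by Lemma~\ref{lem_aux_3_1}(a) as $\bar z^k f-\sum_{j=0}^{k-1}\frac{f^{(j)}(0)}{j!}\bar z^{k-j}$ and use Lemma~\ref{lem_aux_3_1}(b) in the form $W_k^*S^*g=\bar z^k(W_k^*g-g(0))$ on $\mathbb{T}$. Substituting these gives
\begin{equation*}
\bigl\langle \varphi \bar z^k f,W_k^*S^*g\bigr\rangle=\bigl\langle \varphi f,W_k^*g\bigr\rangle-g(0)\langle \varphi f,1\rangle,
\end{equation*}
so the two $\langle \varphi f,W_k^*g\rangle$-terms cancel and we are left with
\begin{equation*}
g(0)\langle \varphi f,1\rangle+\sum_{j=0}^{k-1}\frac{f^{(j)}(0)}{j!}\bigl\langle W_k(\varphi\bar z^{k-j}),S^*g\bigr\rangle.
\end{equation*}

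Finally I would recognise each summand as a rank-one matrix element. Because $f\in K_\alpha$ and $g\in K_\beta$, the identities $\langle f,\bar\varphi\rangle=\langle f,P_\alpha\bar\varphi\rangle=\langle f,\chi\rangle$, $g(0)=\langle g,k_0^\beta\rangle$ and $f^{(j)}(0)=\langle f,k_{0,j}^\alpha\rangle$ apply. Moreover, using $S_\beta^*g=S^*g$ and the self-adjointness of $P_\beta$,
\begin{equation*}
\langle \psi_j,g\rangle=\tfrac{1}{j!}\bigl\langle W_k(\varphi\bar z^{k-j}),P_\beta S^*g\bigr\rangle=\tfrac{1}{j!}\bigl\langle W_k(\varphi\bar z^{k-j}),S^*g\bigr\rangle.
\end{equation*}
Putting these together rewrites the remainder as $\langle f,\chi\rangle\langle k_0^\beta,g\rangle+\sum_j\langle f,k_{0,j}^\alpha\rangle\langle\psi_j,g\rangle$, which is precisely $\langle(k_0^\beta\otimes\chi+\sum_j\psi_j\otimes k_{0,j}^\alpha)f,g\rangle$. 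Since $K_\beta^\infty$ is dense in $K_\beta$, the identity follows on $K_\alpha^\infty$.

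The proof is essentially a bookkeeping exercise; the only genuinely delicate step is the cancellation that produces the $g(0)\langle \varphi f,1\rangle$ boundary term, and this is exactly where Lemma~\ref{lem_aux_3_1}(b) is indispensable. Care must also be taken that $\psi_j$ is an element of $K_\beta$ (so the tensor $\psi_j\otimes k_{0,j}^\alpha$ really maps $K_\alpha\to K_\beta$), but this is automatic since $\psi_j$ is the image of an $S_\beta$.
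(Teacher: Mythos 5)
Your proof is correct and follows essentially the same route as the paper's: pair against $g\in K_\beta^\infty$, reduce $S_\alpha^*$, $S_\beta^*$ to $S^*$ using the $S^*$--invariance of the model spaces, expand $(S^*)^kf$ by Lemma~\ref{lem_aux_3_1}(a), and extract the constant boundary term via Lemma~\ref{lem_aux_3_1}(b) (the paper applies (b) to the grouped pairing $\left\langle \varphi f, W_k^*g - z^kW_k^*S^*g\right\rangle$ rather than substituting $W_k^*S^*g=\overline{z}^k\left(W_k^*g-g(0)\right)$, which is the same computation). The only blemish is a conjugation slip in your intermediate display --- the boundary term is $\overline{g(0)}\langle\varphi f,1\rangle$, not $g(0)\langle\varphi f,1\rangle$ --- but your final identification $\langle f,\chi\rangle\langle k_0^\beta,g\rangle$ uses the correct value, so nothing downstream is affected.
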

\begin{proof}
Let $f\in K_\alpha^\infty$ and $g\in K_\beta^\infty$. By Lemma \ref{lem_aux_3_1}(a) we have
\begin{align*}
  \left\langle \left(U_\varphi^{\alpha,\beta}-S_\beta U_\varphi^{\alpha,\beta}(S_{\alpha}^*)^k\right) f,g \right\rangle&=
  \left\langle \varphi f,W_k^*g \right\rangle -\left\langle \varphi(S_{\alpha}^*)^k f,W_k^*S_\beta^*g \right\rangle\\
  &=\left\langle \varphi f,W_k^*g \right\rangle -\left\langle \varphi(S^*)^k f,W_k^*S^*g \right\rangle\\
  &=\left\langle \varphi f,W_k^*g \right\rangle -\left\langle \varphi\overline{z}^k f,W_k^*S^*g \right\rangle
  +\sum_{j=0}^{k-1}\frac{\left\langle f,k_{0,j}\right\rangle}{j!}\left\langle \varphi\overline{z}^{k-j},W_k^*S^*g \right\rangle\\
  &=\left\langle \varphi f,W_k^*g -z^k W_k^*S^*g \right\rangle
  +\sum_{j=0}^{k-1}\frac{1}{j!}\left\langle f,k_{0,j}^\alpha\right\rangle\left\langle S_\beta P_\beta W_k(\varphi\overline{z}^{k-j}),g \right\rangle.
\end{align*}
By Lemma \ref{lem_aux_3_1}(b),
\begin{align*}
  \left\langle \varphi f,W_k^*g -z^k W_k^*S^*g \right\rangle&=\langle \varphi f,\langle g,k_0^\beta \rangle\rangle
  =\langle \langle f,P_\alpha(\overline{\varphi}) \rangle k_0^\beta,g\rangle.
\end{align*}
\end{proof}

For $k=2$ the following was partly noted in \cite{Ho}.
\begin{lemma}
\label{lem_aux_3_0}
Let $k\in\mathbb{N}$ and $m\in\mathbb{N}_0$, $|m|<k$. Then
  $$W_kM_{z^m}W_k^*=\begin{cases}
                      I_{L^2} &\text{if }m=0,\\
                      0 &\text{if }0<|m|<k.
                    \end{cases}$$
\end{lemma}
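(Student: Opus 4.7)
The plan is to verify the identity by evaluating both sides on the standard orthonormal basis $\{z^n : n \in \mathbb{Z}\}$ of $L^2$ and extending by boundedness. The case $m = 0$ requires no work at all: $M_{z^0} = I_{L^2}$, and Lemma \ref{lem_propert_Wk}(c) already supplies $W_k W_k^* = I_{L^2}$.

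For the case $0 < |m| < k$, I would compute $W_k M_{z^m} W_k^* z^n$ explicitly for an arbitrary $n \in \mathbb{Z}$. By Lemma \ref{lem_propert_Wk}(a),
$$W_k^* z^n(z) = z^{kn}, \qquad \text{so} \qquad M_{z^m} W_k^* z^n = z^{kn+m}.$$
Applying the definition of $W_k$, the vector $z^{kn+m}$ is sent to $z^{(kn+m)/k}$ when $k \mid (kn+m)$ and to $0$ otherwise. Since $k$ divides $kn$ automatically, the divisibility condition reduces to $k \mid m$; under the hypothesis $0 < |m| < k$ this never holds. Therefore $W_k M_{z^m} W_k^* z^n = 0$ for every $n \in \mathbb{Z}$, and by linearity and boundedness the operator $W_k M_{z^m} W_k^*$ vanishes on all of $L^2$.

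There is essentially no obstacle to overcome: both cases collapse to tracking the exponent of a single monomial through $W_k^*$, $M_{z^m}$, and $W_k$ in succession, together with the elementary divisibility remark that $k \nmid m$ whenever $0 < |m| < k$. The only mildly awkward point is the statement's notation, which couples $m \in \mathbb{N}_0$ with $|m| < k$; the computation above is insensitive to the sign of $m$, so the same proof handles any $m \in \mathbb{Z}$ with $|m| < k$ and confirms that the two listed cases exhaust all possibilities.
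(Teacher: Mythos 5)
Your proposal is correct and matches the paper's proof in essence: the paper expands an arbitrary $f=\sum_j a_j z^j$ and tracks the exponents $kj+m$ through $W_k^*$, $M_{z^m}$, $W_k$, concluding from $k\nmid m$ exactly as you do, and likewise dispatches $m=0$ via Lemma \ref{lem_propert_Wk}(c). Working on the basis $\{z^n\}$ and extending by boundedness is merely a cosmetic repackaging of the same computation.
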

\begin{proof}
Let $f\in L^2$, $f=\sum\limits_{j=-\infty}^\infty a_j z^j$. Then
  $$W_k^* f(z)=\sum_{j=-\infty}^\infty a_j z^{kj}\quad\text{and}\quad M_{z^m}W_k^* f(z)=\sum_{j=-\infty}^\infty a_j z^{kj+m}.$$
If $0<|m|<k$, then $kj+m$ is not divisible by $k$ and so $W_kM_{z^m}W_k^* f=0$. On the other hand, if $m=0$, then $W_kW_k^*f=f$ (Lemma \ref{lem_propert_Wk}(c)).
\end{proof}

\begin{lemma}
\label{lem_aux_3_3}
Let $k\in \mathbb{N}$, $\chi\in K_\alpha$ and $\psi_0,\ldots,\psi_{k-1}\in K_\beta$ be such that
  $$\psi_0(0)=\ldots=\psi_{k-1}(0)=0.$$
Then for
\begin{displaymath}
  \varphi=\overline{\chi}+\sum_{j=1}^k(k-j)! \left(W_k^*S_\beta^*\psi_{k-j}\right)\cdot z^j \in L^2
  \end{displaymath}
we have
\begin{equation}
\label{eq_aux_3_3_2}
  U_\varphi^{\alpha,\beta} -S_\beta  U_\varphi^{\alpha,\beta} (S_\alpha^*)^k=k_0^\beta\otimes \chi +\sum_{j=0}^{k-1} \psi_j \otimes k_{0,j}^\alpha.
\end{equation}
\end{lemma}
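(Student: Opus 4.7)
My plan is to apply Lemma \ref{lem_aux_3_2} directly to the $\varphi$ defined in the statement. That lemma already expresses $U_\varphi^{\alpha,\beta}-S_\beta U_\varphi^{\alpha,\beta}(S_\alpha^*)^k$ as $k_0^\beta\otimes\chi'+\sum_{j=0}^{k-1}\psi_j'\otimes k_{0,j}^\alpha$ with $\chi'=P_\alpha(\overline{\varphi})$ and $\psi_j'=\tfrac{1}{j!}S_\beta P_\beta W_k(\varphi\overline{z}^{k-j})$, so the proof reduces to verifying $\chi'=\chi$ and $\psi_j'=\psi_j$ for $0\leq j\leq k-1$. I expect the hypothesis $\psi_j(0)=0$ to enter only at the very last step.

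For $\chi'=\chi$, conjugating the definition of $\varphi$ gives
\begin{equation*}
\overline{\varphi}=\chi+\sum_{j=1}^{k}(k-j)!\,\overline{W_k^*S_\beta^*\psi_{k-j}}\cdot\overline{z}^{j}.
\end{equation*}
Since each $W_k^*S_\beta^*\psi_{k-j}$ lies in $H^2$ and $j\geq 1$, every summand belongs to $\overline{zH^2}$, which is orthogonal to $H^2\supset K_\alpha$. Thus $P_\alpha$ annihilates the sum and leaves $P_\alpha(\chi)=\chi$.

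For $\psi_j'=\psi_j$, after reindexing the defining sum of $\varphi$ by $l=k-j$ we obtain
\begin{equation*}
\varphi\,\overline{z}^{k-j}=\overline{\chi}\,\overline{z}^{k-j}+\sum_{l=0}^{k-1}l!\,(W_k^*S_\beta^*\psi_l)\cdot z^{j-l}.
\end{equation*}
The exponents $j-l$ lie in $(-k,k)$, so Lemma \ref{lem_aux_3_0} gives
\begin{equation*}
W_k\bigl((W_k^*S_\beta^*\psi_l)z^{j-l}\bigr)=(W_kM_{z^{j-l}}W_k^*)S_\beta^*\psi_l=\delta_{j,l}\,S_\beta^*\psi_l,
\end{equation*}
and the sum collapses to $j!\,S_\beta^*\psi_j$. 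The remaining term $\overline{\chi}\,\overline{z}^{k-j}$ has only Fourier modes of index $\leq -1$ (since $k-j\geq 1$), and $W_k$ preserves this property, so $P_\beta$ kills it. Hence $P_\beta W_k(\varphi\,\overline{z}^{k-j})=j!\,S_\beta^*\psi_j$ and $\psi_j'=S_\beta S_\beta^*\psi_j$. A direct computation gives $S_\beta S_\beta^*f=f-f(0)k_0^\beta$ for every $f\in K_\beta$, so the hypothesis $\psi_j(0)=0$ yields $\psi_j'=\psi_j$.

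The main technical obstacle is the bookkeeping in the third paragraph: carrying out the reindexing and combining it with Lemma \ref{lem_aux_3_0} to obtain the collapse, while simultaneously tracking that the stray $\overline{\chi}\,\overline{z}^{k-j}$ contribution is co-analytic and therefore killed by $P_\beta$. Once these two observations are in place, both identities drop out and the conclusion follows.
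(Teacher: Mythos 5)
Your proof is correct and takes essentially the same route as the paper's: both apply Lemma \ref{lem_aux_3_2} and reduce to the two identities $P_\alpha(\overline{\varphi})=\chi$ (the extra summands being co-analytic with no constant term) and $P_\beta W_k(\varphi\overline{z}^{k-j})=j!\,S_\beta^*\psi_j$ via the collapse from Lemma \ref{lem_aux_3_0}, invoking $\psi_j(0)=0$ at the final step $S_\beta S_\beta^*\psi_j=\psi_j$. The only cosmetic difference is that the paper also uses the vanishing hypothesis to rewrite the summands explicitly as $\frac{\psi_{k-j}(z^k)}{z^k}z^j\in H^2_0$, whereas you correctly observe that membership in $\overline{zH^2}^\perp$-complementary terms (and hence the identity $\chi'=\chi$) needs no such hypothesis.
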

\begin{proof}
Note that for each $j\in\{1,\ldots,k\}$ we have $\left(W_k^*S_\beta^*\psi_{k-j}\right)\cdot z^j=\frac{\psi_{k-j}(z^k)}{z^k}z^j\in H^2_0$ since $\psi_{k-j}(0)=0$. This and the fact that $P_{\alpha}=P_{\alpha}P$ give
  $$P_\alpha(\overline{\varphi})=P_\alpha P(\overline{\varphi})=P_\alpha \chi=\chi.$$
Moreover, since $P_{\beta}=P_{\beta}P$ and $PW_k=W_k P$, for $1\leq l\leq k$ we have
\begin{align*}
  P_\beta W_k(\varphi \overline{z}^l)&=P_\beta W_kP(\varphi \overline{z}^l)
  =P_\beta W_k \left(\sum_{j=1}^k (k-j)!\left(W_k^*S_\beta^*\psi_{k-j}\right)\cdot z^{j-l}\right)\\
   &=P_\beta \left(\sum_{j=1}^k (k-j)! W_k M_{z^{j-l}} W_k^* S_\beta^* \psi_{k-j}\right)
  =(k-l)!S_\beta^* \psi_{k-l},
\end{align*}
where the last equality follows from Lemma \ref{lem_aux_3_0} and the fact that $0<|j-l|<k-1$ for each $j\in\{1,\ldots, k\}\setminus\{l\}$. Hence, for $0\leq j\leq k-1$,
  $$\frac{1}{j!}S_\beta P_\beta W_k(\varphi \overline{z}^{k-j})=\frac{1}{j!}S_\beta (j!S_\beta^*\psi_j)=S_\beta S_\beta^*\psi_j=\psi_j \quad (\psi_j(0)=0),$$
and \eqref{eq_aux_3_3_2} follows from Lemma \ref{lem_aux_3_2}.
\end{proof}

\begin{theorem}
\label{thm_char_U_1}
Let $U$ be a bounded linear operator from $K_\alpha$ into $K_\beta$. Then $U\in\mathcal{S}_k(\alpha,\beta)$, $k\in\mathbb{N}$, if and only if there exist functions $\chi\in K_\alpha$ and $\psi_0,\ldots,\psi_{k-1}\in K_\beta$ such that
\begin{align}
\label{eq_char_TSTO}
  U-S_\beta U(S_\alpha^*)^k=k_0^\beta\otimes \chi +\sum_{j=0}^{k-1} \psi_j\otimes k_{0,j}^\alpha.
\end{align}
\end{theorem}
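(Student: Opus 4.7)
My plan is to establish the two implications separately. The forward direction will follow immediately from Lemma~\ref{lem_aux_3_2}: if $U\in\mathcal{S}_k(\alpha,\beta)$, then $U$ agrees on $K_\alpha$ with the bounded extension of $U_\varphi^{\alpha,\beta}$ for some $\varphi\in L^2$, and that lemma will produce the required $\chi=P_\alpha(\overline{\varphi})\in K_\alpha$ together with $\psi_j=\tfrac{1}{j!}S_\beta P_\beta W_k(\varphi\overline{z}^{k-j})\in K_\beta$.

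For the reverse direction, suppose $U\colon K_\alpha\to K_\beta$ is bounded and satisfies \eqref{eq_char_TSTO}. The goal will be to invoke Lemma~\ref{lem_aux_3_3} in order to produce a witnessing $\varphi$, but that lemma requires $\psi_j(0)=0$ for every $j$. My first step will therefore be to reduce to this situation. If $\beta$ is a unimodular constant then $K_\beta=\{0\}$ and the statement is trivial; otherwise $k_0^\beta(0)=1-|\beta(0)|^2>0$, and I would set $\widetilde{\psi}_j=\psi_j-\tfrac{\psi_j(0)}{k_0^\beta(0)}k_0^\beta$ to obtain $\widetilde{\psi}_j(0)=0$. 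Because $(\lambda u)\otimes v=u\otimes (\overline{\lambda}v)$, the correction $\tfrac{\psi_j(0)}{k_0^\beta(0)}k_0^\beta\otimes k_{0,j}^\alpha$ can be absorbed into the first term of \eqref{eq_char_TSTO} by replacing $\chi$ with $\widetilde{\chi}=\chi+\sum_{j=0}^{k-1}\overline{\psi_j(0)/k_0^\beta(0)}\,k_{0,j}^\alpha\in K_\alpha$. Thus, without loss of generality, $\psi_j(0)=0$ for every $j$.

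Next, Lemma~\ref{lem_aux_3_3} will supply $\varphi\in L^2$ such that $U_\varphi^{\alpha,\beta}$ satisfies exactly the same identity \eqref{eq_char_TSTO} on $K_\alpha^\infty$. Setting $V=U-U_\varphi^{\alpha,\beta}$ on $K_\alpha^\infty$, this will produce the homogeneous equation $V=S_\beta V(S_\alpha^*)^k$. Iterating $N$ times yields $Vf=S_\beta^N V(S_\alpha^*)^{Nk}f$ for every $f\in K_\alpha^\infty$ and $N\geq 1$. Since $\|(S_\alpha^*)^{Nk}f\|_{K_\alpha}\to 0$ and $U$ is bounded, the $U$-contribution on the right-hand side tends to zero. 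I would estimate the $U_\varphi^{\alpha,\beta}$-contribution via the inequality $\|U_\varphi^{\alpha,\beta}g\|\leq\|\varphi g\|_{L^2}$ (valid for bounded $g$) combined with the decay of $(S_\alpha^*)^{Nk}f$ to conclude $Vf=0$. Consequently $U_\varphi^{\alpha,\beta}$ will agree on $K_\alpha^\infty$ with the bounded operator $U$, which means $U\in\mathcal{S}_k(\alpha,\beta)$.

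The point I expect to be the main obstacle is precisely this last convergence step. The operator $V$ is a priori only densely defined on $K_\alpha^\infty$, not bounded on $K_\alpha$, so one cannot simply iterate and invoke operator-norm estimates. The bound $\|U_\varphi^{\alpha,\beta}g\|\leq \|\varphi g\|_{L^2}$ requires the pointwise product $\varphi g$ to be controlled in $L^2$, and since $\varphi\in L^2$ need not be essentially bounded, the convergence $(S_\alpha^*)^{Nk}f\to 0$ in $L^2$ does not directly yield $\|\varphi(S_\alpha^*)^{Nk}f\|_{L^2}\to 0$. A careful argument exploiting either the explicit form of $\varphi$ from Lemma~\ref{lem_aux_3_3} (together with Lemma~\ref{lem_aux_3_0} to control the interaction of $W_k$ with multiplication by $\overline{z}^{k-j}$) or the SOT-convergent expansion $Uf=\sum_{n=0}^\infty S_\beta^n R(S_\alpha^*)^{nk}f$ implied by boundedness of $U$ (with $R$ denoting the right-hand side of \eqref{eq_char_TSTO}), will be needed to close the argument.
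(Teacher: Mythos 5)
Your outline reproduces the paper's strategy almost step for step: the forward direction via Lemma \ref{lem_aux_3_2}, the reduction to $\psi_0(0)=\ldots=\psi_{k-1}(0)=0$ by absorbing the corrections into $\chi$ (your normalizing constant $k_0^\beta(0)$ coincides with the paper's $\|k_0^\beta\|^2$, both equal to $1-|\beta(0)|^2$), the construction of the symbol $\varphi=\overline{\chi}+\sum_{j=1}^k(k-j)!\,(W_k^*S_\beta^*\psi_{k-j})\cdot z^j$ from Lemma \ref{lem_aux_3_3}, and the telescoping of \eqref{eq_char_TSTO}. But the proposal stops exactly where the real work begins, and you say so yourself. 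The one concrete estimate you offer for the remainder, $\|S_\beta^{N}U_\varphi^{\alpha,\beta}(S_\alpha^*)^{Nk}f\|\leq\|\varphi\,(S_\alpha^*)^{Nk}f\|_{L^2}$, fails for precisely the reason you identify: with $\varphi$ merely in $L^2$, the $L^2$-decay of $(S_\alpha^*)^{Nk}f$ controls nothing, and the sup-norms of $(S_\alpha^*)^{Nk}f$ need not stay bounded, so no dominated-convergence patch is available either. Announcing that ``a careful argument will be needed to close the argument'' is not that argument; as it stands, the proof is incomplete at its decisive point.

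The paper closes this gap by abandoning norm estimates on the remainder in favor of a bilinear, \emph{asymmetric} one: pair against $g\in K_\beta^\infty$ and split $\varphi=\overline{\chi}+\psi$ with $\psi\in H^2$ the analytic part above. In $\left\langle \varphi(S^*)^{k(n+1)}f,\,W_k^*(S^*)^{n+1}g\right\rangle$ the conjugate-analytic piece is estimated by $\|\chi f\|\cdot\|(S^*)^{n+1}g\|$, which makes sense because $f$ is \emph{bounded} (so $\chi f\in L^2$) while the decaying factor $(S^*)^{n+1}$ lands on $g$ (and $W_k^*$ is isometric); the analytic piece is estimated by $\|(S^*)^{k(n+1)}f\|\cdot\|\psi\cdot W_k^*g\|$, which makes sense because $W_k^*g$ is bounded (so $\psi\cdot W_k^*g\in L^2$) while the decay now lands on $f$. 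Each half of the $L^2$-symbol is thus paired with whichever test function is fixed and bounded, and the shrinking factor is placed on the other side. This yields $\left\langle U_\varphi^{\alpha,\beta}f,g\right\rangle=\left\langle Uf,g\right\rangle$ for all $f\in K_\alpha^\infty$, $g\in K_\beta^\infty$, whence $U=U_\varphi^{\alpha,\beta}$ by density. Your second suggested route, the SOT-convergent expansion $Uf=\sum_{l\geq 0}S_\beta^l(\cdots)(S_\alpha^*)^{kl}f$, is indeed one half of the paper's argument (it is how the series is identified with the bounded operator $U$), but by itself it says nothing about the densely defined $U_\varphi^{\alpha,\beta}$; the missing ingredient is exactly the splitting of $\varphi$ and the two-sided placement of the decay, which is the only genuinely nontrivial idea in the converse direction.
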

\begin{proof}
If $U\in\mathcal{S}_k(\alpha,\beta)$, then $U=U_\varphi^{\alpha,\beta}$ for some $\varphi\in L^2$ and it satisfies \eqref{eq_char_TSTO} by Lemma \ref{lem_aux_3_2}.

Assume that $U$ satisfies \eqref{eq_char_TSTO} for some $\chi\in K_\alpha$ and $\psi_0,\ldots,\psi_{k-1}\in K_\beta$. Without any loss of generality we can additionally assume that $\psi_0(0)=\ldots=\psi_{k-1}(0)=0$ (otherwise we would replace $\psi_j$ and $\chi$ with $\displaystyle \psi_j-\frac{\psi_j(0)}{\|k_0^\beta\|^2}k_{0}^\beta$ and $\displaystyle \chi+\sum\limits_{j=0}^{k-1} \frac{\overline{\psi_j(0)}}{\|k_0^\beta\|^2} k_{0,j}^\alpha$, respectively). By \eqref{eq_char_TSTO}, for every $l\in\mathbb{N}_0$,
  $$S_\beta^l U(S_\alpha^*)^{kl}-S_\beta^{l+1} U(S_\alpha^*)^{(l+1)k}
  =S_\beta^l k_0^\beta\otimes S_\alpha^{kl}\chi +\sum_{j=0}^{k-1} S_\beta^l\psi_j\otimes S_\alpha^{kl}k_{0,j}^\alpha.$$
It follows that for any $n\in\mathbb{N}$,
\begin{align}
\label{eq_aux_char_TSTO_1}
  U&=\sum_{l=0}^n\left(S_\beta^l k_0^\beta\otimes S_\alpha^{kl}\chi +\sum_{j=0}^{k-1} S_\beta^l\psi_j\otimes S_\alpha^{kl}k_{0,j}^\alpha\right)
  +S_\beta^{n+1} U(S_\alpha^*)^{(n+1)k}.
\end{align}
Since for every $f\in K_\alpha$,
  $$S_\beta^{n+1} U(S_\alpha^*)^{(n+1)k}f\rightarrow 0 \quad \text{as }n\to \infty,$$
we have
\begin{align}
\label{eq_aux_char_TSTO_2}
  Uf&=\sum_{l=0}^\infty \left(S_\beta^l k_0^\beta\otimes S_\alpha^{kl}\chi +\sum_{j=0}^{k-1} S_\beta^l\psi_j\otimes S_\alpha^{kl}k_{0,j}^\alpha\right)f.
\end{align}
Let now
  $$\varphi=\overline{\chi}+\psi$$
  with
  $$\psi=\sum_{j=1}^k (k-j)!(W_k^*S_\beta^* \psi_{k-j})\cdot z^j\in H^2.$$
By Lemma \ref{lem_aux_3_3},
  $$U_\varphi^{\alpha,\beta}-S_\beta U_\varphi^{\alpha,\beta} (S_\alpha^*)^k=k_0^\beta\otimes \chi+ \sum_{j=0}^{k-1}\psi_j\otimes k_{0,j}^\alpha \quad \text{on } K_\alpha^\infty,$$
and as above \eqref{eq_aux_char_TSTO_1} holds with $U_\varphi^{\alpha,\beta}$ in place of $U$ (on $K_{\alpha}^{\infty}$). For $f\in K_\alpha^\infty$ and $g\in K_\beta^\infty$ we have
\begin{align}
\label{eq_aux_char_TSTO_3}
  \left\langle S_\beta^{n+1} U_\varphi^{\alpha,\beta}(S_\alpha^*)^{(n+1)k}f,g \right\rangle =\left\langle \varphi(S_\alpha^*)^{k(n+1)} f, W_k^*(S_\beta^*)^{n+1} g \right\rangle\to 0 \quad \text{as }n\to \infty.
\end{align}
Indeed, since
\begin{align*}
  &\left\langle \varphi(S_\alpha^*)^{k(n+1)} f, W_k^*(S_\beta^*)^{n+1} g \right\rangle\\
  &=  \left\langle \overline{\chi}T_{\overline{z}^{k(n+1)}} f, W_k^*(S^*)^{n+1} g \right\rangle+\left\langle (S^*)^{k(n+1)} f, \overline{\psi}W_k^*T_{\overline{z}^{n+1}} g \right\rangle\\
  &=\left\langle P\left(\overline{\chi z^{k(n+1)}} f\right), W_k^*(S^*)^{n+1} g \right\rangle
  +\left\langle (S^*)^{k(n+1)} f, \overline{\psi}W_k^*P({\overline{z}^{n+1}} g) \right\rangle\\
  &=\left\langle P\left(\overline{\chi z^{k(n+1)}} f\right), W_k^*(S^*)^{n+1} g \right\rangle
  +\left\langle (S^*)^{k(n+1)} f, P\left(\overline{z}^{k(n+1)}\overline{\psi}W_k^* g\right) \right\rangle,
\end{align*}
\eqref{eq_aux_char_TSTO_3} follows from the fact that
  $$\left|\left\langle P\left(\overline{\chi z^{k(n+1)}} f\right), W_k^*(S^*)^{n+1} g \right\rangle\right|
  \leq \|\chi f\|\cdot\|(S^*)^{n+1}g\|\to 0 \quad \text{as }n\to \infty,$$
and
\begin{align*}
  \left|\left\langle (S^*)^{k(n+1)} f, P\left(\overline{z}^{k(n+1)}\overline{\psi}W_k^* g\right) \right\rangle\right|
  \leq \|(S^*)^{k(n+1)} f\|\cdot \|\psi\cdot W_k^* g\|\to 0 \quad \text{as }n\to \infty.
\end{align*}
This means that for $f\in K_\alpha^\infty$ and $g\in K_\beta^\infty$,
  $$\left\langle U_\varphi^{\alpha,\beta}f,g \right\rangle =\left\langle \sum_{l=0}^\infty \left(S_\beta^l k_0^\beta\otimes S_\alpha^{kl}\chi +\sum_{j=0}^{k-1} S_\beta^l\psi_j\otimes S_\alpha^{kl}k_{0,j}^\alpha\right)f, g\right\rangle,$$
and by \eqref{eq_aux_char_TSTO_2},
  $$\left\langle U_\varphi^{\alpha,\beta}f,g \right\rangle =\left\langle Uf,g \right\rangle.$$
It follows that $U=U_\varphi^{\alpha,\beta}\in\mathcal{S}_k(\alpha,\beta)$.
\end{proof}

\begin{corollary}\label{symbol}
If a bounded linear operator $U:K_{\alpha}\to K_{\beta}$ satisfies \eqref{eq_char_TSTO}, then $U=U_\varphi^{\alpha,\beta}$ with
  $$\varphi=\overline{\chi} +\sum_{j=0}^{k-1} \psi_{j}(z^k)j!\overline{z}^j.$$
\end{corollary}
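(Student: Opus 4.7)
My plan is to feed the candidate symbol $\varphi=\overline{\chi}+\sum_{j=0}^{k-1}j!\,\psi_j(z^k)\overline{z}^j$ into Lemma \ref{lem_aux_3_2}, read off the pair $(\chi',\psi_0',\ldots,\psi_{k-1}')$ produced by $U_\varphi^{\alpha,\beta}$, and check that the rank-$(k+1)$ operator $k_0^\beta\otimes\chi'+\sum_j\psi_j'\otimes k_{0,j}^\alpha$ actually agrees with the right-hand side of \eqref{eq_char_TSTO}. Once both $U$ and $U_\varphi^{\alpha,\beta}$ satisfy the same identity, the iteration argument used at the end of the proof of Theorem \ref{thm_char_U_1} will force $U=U_\varphi^{\alpha,\beta}$.

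First I would compute $\chi'=P_\alpha(\overline{\varphi})$. Expanding $\overline{\varphi}=\chi+\sum_j j!\,\overline{\psi_j(z^k)}z^j$, the Fourier expansion of $\overline{\psi_j(z^k)}z^j$ on $\mathbb{T}$ is supported on $\{z^{j-kn}:n\ge 0\}$; since $0\le j\le k-1$, only $n=0$ carries a non-negative power, contributing $\overline{\psi_j(0)}\,z^j$. Combining this with $k_{0,j}^\alpha=j!P_\alpha(z^j)$ yields
$$\chi'=\chi+\sum_{j=0}^{k-1}\overline{\psi_j(0)}\,k_{0,j}^\alpha.$$
Next, for $\psi_j'=\tfrac{1}{j!}S_\beta P_\beta W_k(\varphi\overline{z}^{k-j})$, the contribution of $\overline{\chi}\,\overline{z}^{k-j}$ has strictly negative Fourier coefficients, so $W_k$ sends it into $\overline{H^2_0}$ and $P_\beta$ annihilates it. For the remaining terms $\psi_i(z^k)\overline{z}^{i+k-j}$, arguing as in Lemma \ref{lem_aux_3_0} shows $W_k$ selects only $i=j$ and produces $\overline{z}\psi_j$; since $P_\beta(\overline{z}\psi_j)=S_\beta^*\psi_j$ and a direct check gives $S_\beta S_\beta^*\psi_j=P_\beta(\psi_j-\psi_j(0))=\psi_j-\psi_j(0)\,k_0^\beta$, I obtain
$$\psi_j'=\psi_j-\psi_j(0)\,k_0^\beta.$$

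The heart of the argument is the cancellation between the perturbation appearing in $\chi'$ and those appearing in the $\psi_j'$. Antilinearity of $u\otimes v$ in its second slot gives $k_0^\beta\otimes\overline{\psi_j(0)}\,k_{0,j}^\alpha=\psi_j(0)\bigl(k_0^\beta\otimes k_{0,j}^\alpha\bigr)=-(\psi_j'-\psi_j)\otimes k_{0,j}^\alpha$, so
$$k_0^\beta\otimes(\chi'-\chi)+\sum_{j=0}^{k-1}(\psi_j'-\psi_j)\otimes k_{0,j}^\alpha=0,$$
and hence $U_\varphi^{\alpha,\beta}-S_\beta U_\varphi^{\alpha,\beta}(S_\alpha^*)^k$ coincides with the right-hand side of \eqref{eq_char_TSTO}. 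Setting $V:=U_\varphi^{\alpha,\beta}-U$ then yields $V=S_\beta V(S_\alpha^*)^k$; iterating $n$ times and using $\|S_\beta\|\le 1$ together with $\|(S_\alpha^*)^{kn}f\|\to 0$ for every $f\in K_\alpha$ forces $V=0$. The main obstacle I expect is exactly this bookkeeping: $\varphi$ does \emph{not} reproduce $(\chi,\psi_j)$ literally through Lemma \ref{lem_aux_3_2}, and one must notice that the non-uniqueness in representing a rank-$(k+1)$ operator of the prescribed form is precisely what rescues the compact formula.
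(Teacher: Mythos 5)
Your algebraic core is correct and genuinely different from the paper's proof. You feed the unnormalized symbol directly into Lemma \ref{lem_aux_3_2} and compute $\chi'=P_\alpha(\overline{\varphi})=\chi+\sum_{j}\overline{\psi_j(0)}\,k_{0,j}^\alpha$ and $\psi_j'=S_\beta S_\beta^*\psi_j=\psi_j-\psi_j(0)k_0^\beta$; both computations check out (one small caveat: for the $\psi_j'$ computation the relevant exponents are $m=j-i-k\in\{-2k+1,\ldots,-1\}$, so the cases $m=-k$ and $m<-k$ are \emph{not} covered by Lemma \ref{lem_aux_3_0} as stated, which assumes $|m|<k$; the same frequency argument gives $W_kM_{z^{-k}}W_k^*=M_{\overline{z}}$ and zero for $m<-k$, but this needs to be said). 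The cancellation $k_0^\beta\otimes(\chi'-\chi)+\sum_j(\psi_j'-\psi_j)\otimes k_{0,j}^\alpha=0$ is exactly the non-uniqueness of the rank-$(k+1)$ representation, and it is a cleaner mechanism than the paper's: the paper first normalizes to $\psi_j(0)=0$ (absorbing constants into $\chi$), runs the construction from Theorem \ref{thm_char_U_1}, and then verifies by hand that the two candidate symbols differ by an element of $\overline{\alpha H^2}+\overline{z}^{k-1}(W_k^*\beta)H^2$, invoking Proposition \ref{prop_zero_U_bis}. Your defect-operator cancellation replaces that symbol-difference computation entirely.

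The genuine gap is in your last step. The operator $U_\varphi^{\alpha,\beta}$ is only densely defined on $K_\alpha^\infty$ and need not be bounded, so $V=U_\varphi^{\alpha,\beta}-U$ is not known to be bounded; from $Vf=S_\beta^{n}V(S_\alpha^*)^{kn}f$ and $\|(S_\alpha^*)^{kn}f\|\to 0$ you cannot conclude $Vf=0$, since an unbounded operator can send a norm-null sequence of bounded functions to a sequence that does not tend to zero. This is precisely why the proof of Theorem \ref{thm_char_U_1} does not argue in norm but establishes the weak-convergence estimate \eqref{eq_aux_char_TSTO_3}, namely $\left\langle S_\beta^{n+1}U_\varphi^{\alpha,\beta}(S_\alpha^*)^{(n+1)k}f,g\right\rangle\to 0$ for $f\in K_\alpha^\infty$, $g\in K_\beta^\infty$. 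Moreover, you cannot simply cite that estimate: it is proved there for symbols $\overline{\chi}+\psi$ with $\psi\in H^2$, whereas your $\psi=\sum_j j!\,\psi_j(z^k)\overline{z}^j$ has antianalytic frequencies down to $-(k-1)$ whenever some $\psi_j(0)\neq 0$. The repair is short: split $\psi=\psi_a+r$ with $r=\sum_{j=1}^{k-1}j!\,\psi_j(0)\overline{z}^j\in L^\infty$ and $\psi_a=\sum_j j!\left(\psi_j(z^k)-\psi_j(0)\right)\overline{z}^j\in H^2$; the paper's estimate applies to $\overline{\chi}+\psi_a$, while for $r$ one has $\left|\left\langle r\,(S_\alpha^*)^{k(n+1)}f,W_k^*(S_\beta^*)^{n+1}g\right\rangle\right|\leq\|r\|_\infty\,\|(S^*)^{k(n+1)}f\|\,\|g\|\to 0$. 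With that patch (or by normalizing first, as the paper does) your argument is complete.
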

\begin{proof}
Assume that $U$ satisfies \eqref{eq_char_TSTO}. If $\psi_0(0)=\ldots=\psi_{k-1}(0)=0$, then following the proof of Theorem \ref{thm_char_U_1} we get that $U=U_\varphi^{\alpha,\beta}$ with
$$\varphi=\overline{\chi}+\sum_{j=0}^{k-1} j!(W_k^*S_\beta^* \psi_{j})\cdot z^{k-j}=\overline{\chi} +\sum_{j=0}^{k-1} \overline{z}^k\psi_{j}(z^k)j!{z}^{k-j}=\overline{\chi} +\sum_{j=0}^{k-1} \psi_{j}(z^k)j!\overline{z}^j.$$
Otherwise, as explained at the beginning of the proof of Theorem \ref{thm_char_U_1}, replace $\psi_j$ and $\chi$ with $\displaystyle \psi_j-\frac{\psi_j(0)}{\|k_0^\beta\|^2}k_{0}^\beta$ and $\displaystyle \chi+\sum\limits_{j=0}^{k-1} \frac{\overline{\psi_j(0)}}{\|k_0^\beta\|^2} k_{0,j}^\alpha$, respectively. Then, by the first part of the proof, $U=U_{\varphi_1}^{\alpha,\beta}$ with


$$\varphi_1=\overline{\left(\chi +\sum_{j=0}^{k-1} \frac{\overline{\psi_{j}(0)}}{\|k_0^\beta\|^2}{k_{0,j}^\alpha}\right)}+      \sum_{j=0}^{k-1} \left(\psi_{j}(z^k)-\frac{\psi_{j}(0)}{\|k_0^\beta\|^2}k_{0}^\beta(z^k)\right)j!\overline{z}^j.$$
However, in that case
\begin{displaymath}
\begin{split}
\varphi_1-\varphi&= \sum_{j=0}^{k-1} \frac{\psi_{j}(0)}{\|k_0^\beta\|^{2}}\overline{k_{0,j}^\alpha}-  \sum_{j=0}^{k-1} \frac{\psi_{j}(0)}{\|k_0^\beta\|^{2}}(1-\overline{\beta(0)}\beta(z^k))j!\overline{z}^j\\
&= \sum_{j=0}^{k-1} \frac{\psi_{j}(0)}{\|k_0^\beta\|^{2}}\overline{P_\alpha(j!z^j)}-  \sum_{j=0}^{k-1} \frac{\psi_{j}(0)}{\|k_0^\beta\|^{2}}j!\overline{z}^j+\overline{\beta(0)} \beta(z^k)\sum_{j=0}^{k-1} \frac{\psi_{j}(0)}{\|k_0^\beta\|^{2}}j!\overline{z}^j\\
&=\overline{\left( P_{\alpha}\left(\sum_{j=0}^{k-1} \frac{\overline{\psi_{j}(0)}}{\|k_0^\beta\|^{2}}j!{z}^j\right)-\sum_{j=0}^{k-1} \frac{\overline{\psi_{j}(0)}}{\|k_0^\beta\|^{2}}j!{z}^j   \right)}+\overline{z}^{k-1} W_k^*\beta\cdot \overline{\beta(0)} \sum_{j=0}^{k-1} \frac{\psi_{j}(0)}{\|k_0^\beta\|^{2}}j!{z}^{k-1-j}\\&\in \overline{\alpha H^2}+ \overline{z}^{k-1}(W_k^*\beta) H^2
\end{split}
\end{displaymath}
and so, by Proposition \ref{prop_zero_U_bis}, $U=U_{\varphi_1}^{\alpha,\beta}=U_{\varphi}^{\alpha,\beta}$.
\end{proof}

Note that if $\psi_0(0)=\ldots=\psi_{k-1}(0)=0$, then the pairwise orthogonal functions $\psi_{j}(z^k)j!\overline{z}^j$, $0\leq j\leq k-1$, all belong to $H^2_0$.

\begin{corollary}
If $U\in\mathcal{S}_k(\alpha,\beta)$, $k\in\mathbb{N}$, then there exist functions $\chi\in K_\alpha$ and $\psi_0,\ldots,\psi_{k-1}\in K_\beta$ such that $U=U_\varphi^{\alpha,\beta}$ with
  $$\varphi=\overline{\chi} +\psi_0\left(z^k\right) +\frac{1}{z}\psi_1\left(z^k\right) +\frac{1}{z^2}\psi_2\left(z^k\right) +\ldots +\frac{1}{z^{k-1}}\psi_{k-1}\left(z^k\right)$$
and $\psi_0(0)=\ldots=\psi_{k-1}(0)=0$. Moreover, the above decomposition is orthogonal.
\end{corollary}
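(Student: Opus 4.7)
The plan is to combine Theorem~\ref{thm_char_U_1} and Corollary~\ref{symbol} after a single normalization step, and then read off the orthogonality assertion from an elementary Fourier-support computation.

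First I would invoke Theorem~\ref{thm_char_U_1} on $U\in\mathcal{S}_k(\alpha,\beta)$ to produce $\chi\in K_\alpha$ and $\psi_0,\ldots,\psi_{k-1}\in K_\beta$ satisfying \eqref{eq_char_TSTO}. Next, reusing the adjustment performed at the opening of the proof of Theorem~\ref{thm_char_U_1}, I would replace each $\psi_j$ by $\psi_j-\frac{\psi_j(0)}{\|k_0^\beta\|^2}k_0^\beta$ and compensate $\chi$ by adding $\sum_{j=0}^{k-1}\frac{\overline{\psi_j(0)}}{\|k_0^\beta\|^2}k_{0,j}^\alpha$; the new pair still satisfies \eqref{eq_char_TSTO}, and now $\psi_j(0)=0$ for every $0\le j\le k-1$. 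Corollary~\ref{symbol} then yields $U=U_\varphi^{\alpha,\beta}$ with
$$\varphi=\overline{\chi}+\sum_{j=0}^{k-1}\psi_j(z^k)\,j!\,\overline{z}^j.$$
Absorbing the factors $j!$ into $\psi_j$ (which keeps the functions in $K_\beta$ and vanishing at $0$) and using $\overline{z}^j=1/z^j$ on $\mathbb{T}$, I recover the displayed form of $\varphi$ from the statement.

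For orthogonality, the key observation is that $\psi_j(0)=0$ forces the Fourier series of $\psi_j(z^k)$ to be supported in $\{kn:n\ge 1\}$, so the series of $\psi_j(z^k)/z^j$ is supported in $\{kn-j:n\ge 1\}\subset\{1,2,3,\ldots\}$. For distinct $j,j'\in\{0,\ldots,k-1\}$ these two supports sit in distinct residue classes modulo $k$ and are therefore disjoint, which gives pairwise orthogonality among the $k$ terms $\psi_j(z^k)/z^j$. Since $\overline{\chi}\in\overline{H^2}$ has Fourier support in $\{-n:n\ge 0\}$, it is also orthogonal to each of those strictly-positive-frequency summands. I do not expect a serious obstacle here: the whole statement is essentially a repackaging of the preceding two results together with this elementary frequency argument.
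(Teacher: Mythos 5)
Your proposal is correct and follows exactly the route the paper intends: the paper derives this corollary by applying Theorem~\ref{thm_char_U_1} with the normalization $\psi_j(0)=0$ (the replacement $\psi_j\mapsto\psi_j-\frac{\psi_j(0)}{\|k_0^\beta\|^2}k_0^\beta$, $\chi\mapsto\chi+\sum_j\frac{\overline{\psi_j(0)}}{\|k_0^\beta\|^2}k_{0,j}^\alpha$), then invoking Corollary~\ref{symbol} and absorbing the factors $j!$, with orthogonality noted via the remark that the functions $\psi_j(z^k)j!\overline{z}^j$ lie in $H^2_0$ with pairwise disjoint Fourier supports in distinct residue classes modulo $k$. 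Your frequency-support argument, including the orthogonality of $\overline{\chi}$ to the positive-frequency terms, is precisely the elementary computation the paper leaves implicit.
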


\begin{corollary}
\label{thm_char_U_2}
Let $U$ be a bounded linear operator from $K_\alpha$ into $K_\beta$. Then $U\in\mathcal{S}_k(\alpha,\beta)$, $k\in\mathbb{N}$, if and only if there exist functions $\chi\in K_\alpha$ and $\psi_0,\ldots,\psi_{k-1}\in K_\beta$ such that
\begin{align}
\label{eq_char_TSTO_2}
  U-S^*_\beta US_\alpha^k=\widetilde{k}_0^\beta\otimes \chi +\sum_{j=0}^{k-1} \psi_j\otimes \widetilde{k}_{0,j}^\alpha.
\end{align}
\end{corollary}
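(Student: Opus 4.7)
The plan is to reduce Corollary \ref{thm_char_U_2} to Theorem \ref{thm_char_U_1} by conjugating with the natural conjugations $C_\alpha$ and $C_\beta$. The key observation is that the operator identity in \eqref{eq_char_TSTO_2} differs from the one in \eqref{eq_char_TSTO} precisely by the substitutions $S_\beta \leftrightarrow S_\beta^*$, $S_\alpha^* \leftrightarrow S_\alpha$ and $k_{0,j}^\alpha, k_0^\beta \leftrightarrow \widetilde{k}_{0,j}^\alpha, \widetilde{k}_0^\beta$. These substitutions are exactly what one obtains after sandwiching everything between $C_\beta$ and $C_\alpha$.

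First I would collect the three auxiliary identities needed. (i) The well-known relations $C_\alpha S_\alpha C_\alpha = S_\alpha^*$ and $C_\beta S_\beta C_\beta = S_\beta^*$, which follow from the $C_\alpha$-symmetry of the truncated Toeplitz operator $A_z^\alpha = S_\alpha$ (recall $C_\alpha A_\varphi^\alpha C_\alpha = A_{\overline\varphi}^\alpha$). (ii) The rank-one identity
\begin{equation*}
C_\beta(f\otimes g)C_\alpha = C_\beta f \otimes C_\alpha g,
\end{equation*}
which one checks directly using $\langle C_\alpha h, g\rangle = \overline{\langle h, C_\alpha g\rangle}$. (iii) The definitional identities $C_\alpha k_{0,j}^\alpha = \widetilde k_{0,j}^\alpha$ and $C_\beta k_0^\beta = \widetilde k_0^\beta$.

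For the forward direction, suppose $U\in\mathcal{S}_k(\alpha,\beta)$. By Corollary \ref{cor_cong_slant_k}, the operator $V = C_\beta U C_\alpha$ also belongs to $\mathcal{S}_k(\alpha,\beta)$, so by Theorem \ref{thm_char_U_1} there exist $\chi'\in K_\alpha$ and $\psi_0',\dots,\psi_{k-1}'\in K_\beta$ with
\begin{equation*}
V - S_\beta V (S_\alpha^*)^k = k_0^\beta\otimes \chi' + \sum_{j=0}^{k-1} \psi_j'\otimes k_{0,j}^\alpha.
\end{equation*}
Conjugating both sides by $C_\beta$ on the left and $C_\alpha$ on the right and applying (i)--(iii), together with $C_\beta^2 = I$, $C_\alpha^2 = I$, the left side becomes $U - S_\beta^* U S_\alpha^k$ and the right side becomes $\widetilde k_0^\beta\otimes C_\alpha\chi' + \sum_{j=0}^{k-1} C_\beta\psi_j' \otimes \widetilde k_{0,j}^\alpha$. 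Setting $\chi = C_\alpha\chi'\in K_\alpha$ and $\psi_j = C_\beta \psi_j'\in K_\beta$ gives \eqref{eq_char_TSTO_2}.

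For the converse, suppose $U$ satisfies \eqref{eq_char_TSTO_2} with some $\chi\in K_\alpha$ and $\psi_0,\dots,\psi_{k-1}\in K_\beta$. Running the same conjugation argument in reverse, $V = C_\beta U C_\alpha$ satisfies \eqref{eq_char_TSTO} with $\chi' = C_\alpha\chi$ and $\psi_j' = C_\beta\psi_j$, so by Theorem \ref{thm_char_U_1} we have $V\in\mathcal{S}_k(\alpha,\beta)$, and then Corollary \ref{cor_cong_slant_k} gives $U = C_\beta V C_\alpha \in \mathcal{S}_k(\alpha,\beta)$. There is no serious obstacle: the main point is simply to verify that conjugation converts $S_\beta$ into $S_\beta^*$ and $(S_\alpha^*)^k$ into $S_\alpha^k$, and that it converts each rank-one tensor on the right side of \eqref{eq_char_TSTO} into the corresponding tilde tensor of \eqref{eq_char_TSTO_2}.
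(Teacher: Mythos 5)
Your proposal is correct and takes essentially the same route as the paper's own proof: both reduce \eqref{eq_char_TSTO_2} to Theorem \ref{thm_char_U_1} via Corollary \ref{cor_cong_slant_k} by conjugating the identity with $C_\beta$ and $C_\alpha$, using the $C$-symmetry $C_\alpha S_\alpha C_\alpha = S_\alpha^*$, the rank-one identity $C_\beta(f\otimes g)C_\alpha=(C_\beta f)\otimes(C_\alpha g)$, and the definitions $C_\alpha k_{0,j}^\alpha=\widetilde{k}_{0,j}^\alpha$, $C_\beta k_0^\beta=\widetilde{k}_0^\beta$. The only cosmetic difference is that the paper presents the argument as a single chain of equivalences, while you verify the two implications separately.
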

\begin{proof}
By Corollary \ref{cor_cong_slant_k}, $U\in\mathcal{S}_k(\alpha,\beta)$ if and only if $C_\beta UC_\alpha\in\mathcal{S}_k(\alpha,\beta)$. By Theorem \ref{thm_char_U_1}, the latter happens if and only if
\begin{equation}
\label{eq_aux_char_U_2}
 C_\beta UC_\alpha- S_\beta (C_\beta UC_\alpha)(S_\alpha^*)^k=k_0^\beta\otimes \mu +\sum_{j=0}^{k-1} \nu_j\otimes k_{0,j}^\alpha
\end{equation}
for some functions $\mu\in K_\alpha$ and $ \nu_0,\ldots,\nu_{k-1}\in K_\beta$. Since $C_\alpha$ is an involution, \eqref{eq_aux_char_U_2} is equivalent to
\begin{displaymath}
  C_\beta^2 UC_\alpha^2- C_\beta S_\beta C_\beta UC_\alpha(S_\alpha^*)^kC_\alpha=C_\beta(k_0^\beta\otimes \mu)C_\alpha +\sum_{j=0}^{k-1} C_\beta(\nu_j\otimes k_{0,j}^\alpha)C_\alpha.
	%
\end{displaymath}
By $C_\alpha$-symmetry of the compressed shift, the above can be written as
$$U-S^*_\beta US_\alpha^k=\widetilde{k}_0^\beta\otimes \chi +\sum_{j=0}^{k-1} \psi_j\otimes \widetilde{k}_{0,j}^\alpha,$$
with $\chi=C_\alpha\mu\in K_{\alpha}$ and $\psi_j=C_\beta\nu_j\in K_{\beta}$ for $j=0,1,\ldots,k-1$.
\end{proof}

\begin{corollary}
If $U$ satisfies \eqref{eq_char_TSTO_2}, then $U=U_\varphi^{\alpha,\beta}$ with
  $$\varphi=\beta(z^k)\overline{z}^k\overline{\chi} +\overline{\alpha}\sum_{j=0}^{k-1} \psi_{j}(z^k)j!z^{j+1}.$$
\end{corollary}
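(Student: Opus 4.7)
My plan is to reduce this corollary to the already--proved Corollary \ref{symbol} by exploiting the conjugation identity from Proposition \ref{lem_cong_U} together with the structure established in the proof of Corollary \ref{thm_char_U_2}. The key observation is that Corollary \ref{thm_char_U_2} was obtained by applying Theorem \ref{thm_char_U_1} to $C_\beta U C_\alpha$ rather than to $U$ itself, producing the relation \eqref{eq_char_TSTO} with symbols $\mu=C_\alpha\chi\in K_\alpha$ and $\nu_j=C_\beta\psi_j\in K_\beta$.

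First I would apply Corollary \ref{symbol} to the operator $C_\beta U C_\alpha$, which gives $C_\beta U C_\alpha=U_{\varphi_0}^{\alpha,\beta}$ with
$$\varphi_0=\overline{C_\alpha\chi}+\sum_{j=0}^{k-1}(C_\beta\psi_j)(z^k)\, j!\,\overline{z}^{j}.$$
Since $C_\alpha$ and $C_\beta$ are involutions, I then recover $U=C_\beta U_{\varphi_0}^{\alpha,\beta}C_\alpha$. Invoking Proposition \ref{lem_cong_U} yields $U=U_\psi^{\alpha,\beta}$ with $\psi=\overline{z^{k-1}\alpha\varphi_0}\,W_k^*\beta=\overline{z^{k-1}\alpha\varphi_0}\,\beta(z^k)$.

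The remaining step is a computation on $\mathbb{T}$ using $C_\alpha\chi=\alpha\overline{z}\,\overline{\chi}$ and $(C_\beta\psi_j)(z^k)=\beta(z^k)\,\overline{z}^{k}\,\overline{\psi_j(z^k)}$. Substituting these into $\varphi_0$ and multiplying by $z^{k-1}\alpha$ gives
$$z^{k-1}\alpha\varphi_0=z^{k}\chi+\alpha\,\beta(z^k)\sum_{j=0}^{k-1}\overline{\psi_j(z^k)}\,j!\,\overline{z}^{j+1},$$
where I have used $\alpha\overline{\alpha}=1$ on $\mathbb{T}$. Taking the complex conjugate and multiplying by $\beta(z^k)$, the factor $\overline{\beta(z^k)}\beta(z^k)=1$ collapses the second term and produces exactly
$$\varphi=\beta(z^k)\overline{z}^{k}\overline{\chi}+\overline{\alpha}\sum_{j=0}^{k-1}\psi_j(z^k)\,j!\,z^{j+1},$$
which is the asserted formula. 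The main obstacle is simply keeping track of the several conjugations and verifying the cancellations of $|\alpha|=|\beta(z^k)|=1$ on $\mathbb{T}$; no genuinely new analytic input is required beyond Corollary \ref{symbol} and Proposition \ref{lem_cong_U}.
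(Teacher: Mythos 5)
Your proposal is correct and follows essentially the same route as the paper's own proof: the paper likewise observes that $C_\beta UC_\alpha$ satisfies \eqref{eq_aux_char_U_2} with $\mu=C_\alpha\chi$ and $\nu_j=C_\beta\psi_j$, applies Corollary \ref{symbol} to get a symbol for $C_\beta UC_\alpha$, and then uses Proposition \ref{lem_cong_U} together with the unimodularity of $\alpha$ and $\beta(z^k)$ on $\mathbb{T}$ to carry out exactly your final computation. Your bookkeeping of the conjugations (in particular $(C_\beta\psi_j)(z^k)=\beta(z^k)\overline{z}^k\overline{\psi_j(z^k)}$) checks out, so there is nothing to fix.
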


\begin{proof}
Assume that $U$ satisfies \eqref{eq_char_TSTO_2}. A reasoning similar to the one given in the proof of Corollary \ref{thm_char_U_2} shows that $C_\beta UC_\alpha$ satisfies \eqref{eq_aux_char_U_2} with $\mu =C_\alpha\chi$ and $\nu_j=C_\beta \psi_j$. By Corollary \ref{symbol}, $C_\beta UC_\alpha=U_{\psi}^{\alpha,\beta}$ with
$$\psi=\overline{\mu} +\sum_{j=0}^{k-1} \nu_{j}(z^k)j!\overline{z}^j$$
and by Proposition \ref{lem_cong_U}, $U=C_\beta U_{\psi}^{\alpha,\beta}C_\alpha=U_{\varphi}^{\alpha,\beta}$ with
\begin{displaymath}
\begin{split}
\varphi&=\overline{z^{k-1}\alpha\psi}W_k^*\beta=W_k^*\beta\left(\overline{z^{k-1}\alpha}{\mu} +\overline{z^{k-1}\alpha}\sum_{j=0}^{k-1} \overline{\nu_{j}(z^k)}j!{z}^j\right)\\
&=\beta(z^k)\left(\overline{z^{k-1}\alpha}\alpha\overline{z\chi} +\sum_{j=0}^{k-1}\overline{z^{k-1}\alpha} \overline{\beta(z^k)}z^k\psi_{j}(z^k)j!{z}^j\right)\\
&=\beta(z^k)\overline{z}^{k}\overline{\chi} +\sum_{j=0}^{k-1}\overline{\alpha} \psi_{j}(z^k)j!{z}^{j+1}.
\end{split}
\end{displaymath}
\end{proof}


Using Theorem \ref{thm_char_U_1} and Corollary \ref{thm_char_U_2} one can prove the following.

\begin{corollary}
Let $U$ be a bounded linear operator from $K_\alpha$ into $K_\beta$. Then $U\in\mathcal{S}_k(\alpha,\beta)$, $k\in\mathbb{N}$, if and only if there exist functions $\chi\in K_\alpha$ and $\psi_0,\ldots,\psi_{k-1}\in K_\beta$ (possibly different for different conditions) such that one (and all) of the following conditions holds:
\begin{enumerate}
  \item[(a)] $\displaystyle S_\beta^*U- U(S_\alpha^*)^k=\widetilde{k}_0^\beta\otimes \chi +\sum_{j=0}^{k-1} \psi_j\otimes k_{0,j}^\alpha$,
  \item[(b)] $\displaystyle S_\beta U- US_\alpha^k=k_0^\beta\otimes \chi +\sum_{j=0}^{k-1} \psi_j\otimes \widetilde{k}_{0,j}^\alpha$.
\end{enumerate}
\end{corollary}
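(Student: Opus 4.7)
The plan is to deduce each of conditions (a) and (b) from the two earlier characterizations of $\mathcal{S}_k(\alpha,\beta)$ (Theorem \ref{thm_char_U_1} and Corollary \ref{thm_char_U_2}, respectively) by pre-multiplying the appropriate defect identity by $S_\beta$ or $S_\beta^{*}$ and absorbing the resulting rank-one terms. The modified $\chi$ and $\psi_j$ that appear in (a) and (b) will in general differ from those in Theorem \ref{thm_char_U_1} and Corollary \ref{thm_char_U_2}, in agreement with the ``possibly different'' clause of the statement.

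The workhorses are the standard rank-one defect identities on the model space $K_\beta$,
\begin{equation*}
  I_{K_\beta}-S_\beta S_\beta^{*}=k_0^\beta\otimes k_0^\beta,\qquad I_{K_\beta}-S_\beta^{*}S_\beta=\widetilde k_0^\beta\otimes \widetilde k_0^\beta,
\end{equation*}
together with the elementary computations
\begin{equation*}
  S_\beta^{*}k_0^\beta=-\overline{\beta(0)}\,\widetilde k_0^\beta,\qquad S_\beta\widetilde k_0^\beta=-\beta(0)\,k_0^\beta.
\end{equation*}
The first defect formula follows from $S_\beta S_\beta^{*}f=P_\beta(zS^{*}f)=P_\beta(f-f(0))=f-f(0)k_0^\beta$ via $P_\beta 1=k_0^\beta$; the second follows from the first by conjugating with $C_\beta$, using $C_\beta S_\beta C_\beta=S_\beta^{*}$ and $C_\beta k_0^\beta=\widetilde k_0^\beta$. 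The two scalar identities are immediate from the formulas for $k_0^\beta$ and $\widetilde k_0^\beta$ together with $P_\beta\beta=0$.

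To prove (a), I would take the identity of Theorem \ref{thm_char_U_1} and multiply on the left by $S_\beta^{*}$. The term $S_\beta^{*}S_\beta U(S_\alpha^{*})^k$ becomes $U(S_\alpha^{*})^k-\widetilde k_0^\beta\otimes(S_\alpha^k U^{*}\widetilde k_0^\beta)$, and the summand $S_\beta^{*}(k_0^\beta\otimes\chi)$ collapses to $\widetilde k_0^\beta\otimes(-\beta(0)\chi)$; collecting the $\widetilde k_0^\beta\otimes\cdot$ contributions produces (a) with $\chi'=-S_\alpha^k U^{*}\widetilde k_0^\beta-\beta(0)\chi\in K_\alpha$ and $\psi_j'=S_\beta^{*}\psi_j\in K_\beta$. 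Conversely, pre-multiplying (a) by $S_\beta$ and using $S_\beta S_\beta^{*}=I-k_0^\beta\otimes k_0^\beta$ together with $S_\beta\widetilde k_0^\beta=-\beta(0)k_0^\beta$ rewrites the identity in the form required by Theorem \ref{thm_char_U_1}. The derivation of (b) from Corollary \ref{thm_char_U_2} is entirely parallel, with the roles of $S_\beta$ and $S_\beta^{*}$ exchanged and the roles of $k_0^\beta$ and $\widetilde k_0^\beta$ swapped.

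There is no real obstacle here beyond bookkeeping: the only point that needs care is keeping track of how the scalar conjugation acts on the second slot of a rank-one operator (so that $-\overline{\beta(0)}\widetilde k_0^\beta\otimes\chi$ becomes $\widetilde k_0^\beta\otimes(-\beta(0)\chi)$) and verifying that each modified function still lies in the appropriate model space, which follows from the $S^{*}$-invariance of $K_\alpha$ and $K_\beta$ and the fact that $S_\alpha^k,S_\beta^{*},S_\beta$ all preserve these subspaces.
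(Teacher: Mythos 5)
Your proposal is correct and is essentially the paper's intended argument: the paper's proof consists only of a pointer to the analogous computation in \cite[Cor.\ 2.4]{BM}, and that computation is exactly your scheme of multiplying the identities of Theorem \ref{thm_char_U_1} and Corollary \ref{thm_char_U_2} on the left by $S_\beta^*$ (resp.\ $S_\beta$) and absorbing the resulting rank-one terms via the defect identities $I-S_\beta S_\beta^*=k_0^\beta\otimes k_0^\beta$, $I-S_\beta^*S_\beta=\widetilde{k}_0^\beta\otimes\widetilde{k}_0^\beta$ and the relations $S_\beta^*k_0^\beta=-\overline{\beta(0)}\widetilde{k}_0^\beta$, $S_\beta\widetilde{k}_0^\beta=-\beta(0)k_0^\beta$. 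Your explicit formulas (e.g.\ $\chi'=-S_\alpha^kU^*\widetilde{k}_0^\beta-\beta(0)\chi$, $\psi_j'=S_\beta^*\psi_j$) check out, so you have simply supplied the bookkeeping the paper delegates to the reference.
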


\begin{proof}
Use reasoning analogous to the one presented in the proof of \cite[Cor. 2.4]{BM}.


\end{proof}

Note that if $\text{dim}K_{\beta}=1$, then by Theorem \ref{thm_char_U_1} all bounded linear operators from $K_{\alpha}$ into $K_{\beta}$ belong to $\mathcal{S}_k(\alpha,\beta)$ for each $k\in\mathbb{N}$.

\begin{corollary}
Let $\alpha$ and $\beta$ be two inner functions and assume that $\text{dim}K_{\alpha}=m<+\infty$. If $k\geq m$, then every bounded linear operator from $K_{\alpha}$ into $K_{\beta}$ belongs to $\mathcal{S}_k(\alpha,\beta)$.
\end{corollary}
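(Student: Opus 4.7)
The plan is to apply Theorem \ref{thm_char_U_1} directly: given a bounded operator $U\colon K_\alpha\to K_\beta$, I must exhibit $\chi\in K_\alpha$ and $\psi_0,\ldots,\psi_{k-1}\in K_\beta$ with
$$U-S_\beta U(S_\alpha^*)^k=k_0^\beta\otimes\chi+\sum_{j=0}^{k-1}\psi_j\otimes k_{0,j}^\alpha.$$
Call the left-hand side $T$. Since $\dim K_\alpha=m$ and both model spaces are finite-dimensional (well, $K_\alpha$ at least), $T$ is just an arbitrary linear map $K_\alpha\to K_\beta$. The key observation is that the right-hand side, evaluated on $f\in K_\alpha$, reads
$$\langle f,\chi\rangle\,k_0^\beta+\sum_{j=0}^{k-1}f^{(j)}(0)\psi_j,$$
so if the functionals $f\mapsto f^{(j)}(0)$, $0\leq j\leq k-1$, already separate points of $K_\alpha$, then any $T$ can be realized by such a sum (even with $\chi=0$).

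The crux, therefore, is to prove that $\{k_{0,0}^\alpha,k_{0,1}^\alpha,\ldots,k_{0,m-1}^\alpha\}$ is a basis of $K_\alpha$ when $\dim K_\alpha=m$; since $k\geq m$, adjoining zero vectors $\psi_m=\cdots=\psi_{k-1}=0$ then gives the required decomposition. Since $k_{0,j}^\alpha=j!\,P_\alpha(z^j)$, it is enough to show that $P_\alpha$ restricted to the space $\mathcal{P}_{m-1}$ of polynomials of degree less than $m$ is a bijection onto $K_\alpha$. Both spaces have dimension $m$, so injectivity suffices: if $p\in\mathcal{P}_{m-1}\cap\alpha H^2$, write $p=\alpha h$ with $h\in H^2$; because $|\alpha|=1$ on $\mathbb{T}$ we have $h\in H^\infty$, and the quotient $p/\alpha=h$ being analytic in $\mathbb{D}$ forces $p$ to vanish at each zero of the degree-$m$ finite Blaschke product $\alpha$ with at least the prescribed multiplicities, yielding a total of $m$ zeros; but $\deg p<m$ forces $p\equiv0$.

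Once the basis property is in hand, pick the dual basis $\{e_0,\ldots,e_{m-1}\}\subset K_\alpha$ characterized by $e_i^{(j)}(0)=\delta_{ij}$. Every $f\in K_\alpha$ expands as $f=\sum_{i=0}^{m-1}f^{(i)}(0)\,e_i$, so defining $\psi_j=Te_j$ for $0\leq j<m$ and $\psi_j=0$ for $m\leq j<k$, we obtain
$$Tf=\sum_{j=0}^{k-1}f^{(j)}(0)\,\psi_j=\sum_{j=0}^{k-1}\langle f,k_{0,j}^\alpha\rangle\psi_j,$$
i.e.\ $T=\sum_{j=0}^{k-1}\psi_j\otimes k_{0,j}^\alpha$. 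Taking $\chi=0$ verifies the hypothesis of Theorem \ref{thm_char_U_1}, and we conclude $U\in\mathcal{S}_k(\alpha,\beta)$.

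I expect the main obstacle to be the clean verification of the basis property, particularly the bookkeeping of multiplicities when $\alpha$ has repeated zeros, or zeros away from the origin; the factorization/degree-counting argument above handles it, but one should be careful that ``$p$ vanishes to the correct order at each $w_i$'' is genuinely $m$ linear conditions on $p$, exceeding $\dim\mathcal{P}_{m-1}=m$ only when $p\ne 0$. Everything else is routine linear algebra composed with the characterization already in hand.
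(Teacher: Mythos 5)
Your proof is correct and follows essentially the same route as the paper: both reduce the claim to Theorem \ref{thm_char_U_1} by observing that $U-S_\beta U(S_\alpha^*)^k$ has rank at most $m\leq k$ and that $k_{0,0}^{\alpha},\ldots,k_{0,m-1}^{\alpha}$ span $K_\alpha$, the only cosmetic difference being that you realize the operator directly via the dual basis (with $\chi=0$ and $\psi_j=Te_j$) while the paper expands a rank-one decomposition $\sum f_j\otimes g_j$ and rewrites each $g_j$ in terms of the kernels. The one genuine addition is your verification, via the finite Blaschke product factorization of $\alpha$ and zero/degree counting, that these kernels actually form a basis of $K_\alpha$ --- a fact the paper asserts without proof.
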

\begin{proof}
Let $U$ be a bounded linear operator from $K_\alpha$ into $K_\beta$. If $\text{dim}K_{\alpha}=m<+\infty$, then $U-S_{\beta}U(S_{\alpha}^*)^k$ has rank at most $m$, which by assumption is less or equal to $k$. Hence, there exist functions $g_0,\ldots,g_{k-1}\in K_{\alpha}$ and $f_0,\ldots,f_{k-1}\in K_{\beta}$ (some of these functions possibly equal to zero), such that
$$U-S_{\beta}U(S_{\alpha}^*)^k=\sum_{j=0}^{k-1}f_j\otimes g_j.$$
Since here the kernels $k_{0,0}^{\alpha},k_{0,1}^{\alpha},\ldots,k_{0,m-1}^{\alpha}$ are linearly independent and span $K_{\alpha}$, each $g_j$ can be written as a linear combination of these kernels and so $U$ satisfies \eqref{eq_char_TSTO}.
\end{proof}

\begin{example}
As in Example \ref{ex1} consider $\alpha(z)=z^4$ and $\beta(z)=z^3$. Then, for $k=5$ and $\varphi=\sum_{n=-\infty}^{\infty}a_nz^n\in L^2$, the operator $U_{\varphi}^{\alpha,\beta}$ is represented by the matrix
$$\left[\begin{array}{cccc}a_0&a_{-1}&a_{-2}&a_{-3}\\
a_{5}&a_{4}&a_{3}&a_{2}\\
a_{10}&a_{9}&a_{8}&a_{7}\end{array}\right].$$
It follows easily that every bounded linear operator from $K_{\alpha}$ into $K_{\beta}$ belongs to $\mathcal{S}_5(\alpha,\beta)$. Note that here $U_{z}^{\alpha,\beta}=0,$ but $$z\notin \overline{\alpha H^2}+\overline{z}^{k-1}(W_k^*\beta) H^2=\overline{z^4 H^2}+z^{11} H^2.$$
\end{example}

Finally, we describe some rank--one operators from $\mathcal{S}_k(\alpha,\beta)$.

\begin{proposition}\label{P13}
Let $\alpha$, $\beta$ be two inner functions and let $k\in \mathbb{N}$. Then for each $l\in\{0,1,\ldots,k-1\}$ the rank--one operators $\widetilde{k}_0^\beta\otimes k^\alpha_{0,l}$ and ${k}_0^\beta\otimes \widetilde{k}^\alpha_{0,l}$ belong to $\mathcal{S}_k(\alpha,\beta)$.
\end{proposition}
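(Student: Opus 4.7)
The plan is to verify the characterization in Theorem~\ref{thm_char_U_1} directly for the operator $U = \widetilde{k}_0^\beta \otimes k_{0,l}^\alpha$, and then to deduce membership of $k_0^\beta \otimes \widetilde{k}_{0,l}^\alpha$ via conjugation using Corollary~\ref{cor_cong_slant_k}.

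First, I would use the elementary rank-one identity $A(f \otimes g)B = (Af) \otimes (B^{*}g)$ to rewrite
\[
U - S_\beta U (S_\alpha^*)^k = \widetilde{k}_0^\beta \otimes k_{0,l}^\alpha - (S_\beta \widetilde{k}_0^\beta) \otimes (S_\alpha^k k_{0,l}^\alpha).
\]
The key auxiliary computation is $S_\beta \widetilde{k}_0^\beta = -\beta(0)\,k_0^\beta$. Indeed, $z\widetilde{k}_0^\beta(z) = \beta(z) - \beta(0)$ by the explicit formula for $\widetilde{k}_0^\beta$, and applying $P_\beta$ kills the first term (since $\beta \in \beta H^2$) while $P_\beta 1 = k_0^\beta$ (from $P_\beta f = f - \beta P(\overline{\beta}f)$ applied to $f=1$).

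Substituting this back and using the antilinearity of $\otimes$ in its second slot to absorb the scalar $\beta(0)$ on the right, I obtain
\[
U - S_\beta U(S_\alpha^*)^k = k_0^\beta \otimes \chi + \psi_l \otimes k_{0,l}^\alpha,
\]
with $\chi = \overline{\beta(0)}\, S_\alpha^k k_{0,l}^\alpha \in K_\alpha$, $\psi_l = \widetilde{k}_0^\beta \in K_\beta$, and $\psi_j = 0$ for $j \ne l$. This matches exactly the right-hand side of \eqref{eq_char_TSTO}, so Theorem~\ref{thm_char_U_1} yields $U \in \mathcal{S}_k(\alpha,\beta)$.

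For the second operator I would use the identity $C_\beta(f \otimes g)C_\alpha = (C_\beta f) \otimes (C_\alpha g)$, which is a one-line consequence of the defining property $\langle Ch, g\rangle = \langle Cg, h\rangle$ of a conjugation. Combined with $C_\beta \widetilde{k}_0^\beta = k_0^\beta$ (because $C_\beta$ is involutive and $\widetilde{k}_0^\beta = C_\beta k_0^\beta$) and $C_\alpha k_{0,l}^\alpha = \widetilde{k}_{0,l}^\alpha$ (by definition), this gives $k_0^\beta \otimes \widetilde{k}_{0,l}^\alpha = C_\beta(\widetilde{k}_0^\beta \otimes k_{0,l}^\alpha)C_\alpha$, and Corollary~\ref{cor_cong_slant_k} finishes the proof.

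The entire argument is essentially elementary, and I do not foresee any substantial obstacle; the whole weight rests on the one-line identity $S_\beta \widetilde{k}_0^\beta = -\beta(0)\,k_0^\beta$, which makes the defect fit the pattern of \eqref{eq_char_TSTO} with only the $\chi$-term and a single $\psi_l$-term nonzero.
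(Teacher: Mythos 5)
Your proof is correct and takes essentially the same route as the paper's: you verify \eqref{eq_char_TSTO} for $\widetilde{k}_0^\beta\otimes k_{0,l}^\alpha$ via the identity $S_\beta\widetilde{k}_0^\beta=-\beta(0)k_0^\beta$, with exactly the paper's choices $\chi=\overline{\beta(0)}\,S_\alpha^k k_{0,l}^\alpha$, $\psi_l=\widetilde{k}_0^\beta$, $\psi_j=0$ for $j\neq l$, and then obtain ${k}_0^\beta\otimes\widetilde{k}_{0,l}^\alpha$ by conjugating with $C_\beta$ and $C_\alpha$. The only difference is cosmetic: you spell out the one-line verifications of $S_\beta\widetilde{k}_0^\beta=-\beta(0)k_0^\beta$ and of $C_\beta(f\otimes g)C_\alpha=(C_\beta f)\otimes(C_\alpha g)$, which the paper uses without comment.
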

\begin{proof}
Let $l\in\{0,1,\ldots,k-1\}$. Since $S_\beta\widetilde{k}_0^\beta=-\beta(0)k_0^\beta$, we have
\begin{align*}
  \widetilde{k}_0^\beta\otimes k^\alpha_{0,l} -S_\beta (\widetilde{k}_0^\beta\otimes k^\alpha_{0,l}) (S^*_\alpha)^k
  &=\widetilde{k}_0^\beta\otimes k^\alpha_{0,l} -(S_\beta\widetilde{k}_0^\beta)\otimes (S^k_\alpha k^\alpha_{0,l})\\
  &=\widetilde{k}_0^\beta\otimes k^\alpha_{0,l} +(\beta(0)k_0^\beta)\otimes (S^k_\alpha k^\alpha_{0,l})\\
  &=k_0^\beta\otimes( \overline{\beta(0)}S^k_\alpha k^\alpha_{0,l})+\widetilde{k}_0^\beta\otimes k^\alpha_{0,l}
\end{align*}
and by Theorem \ref{thm_char_U_1}, $\widetilde{k}_0^\beta\otimes k^\alpha_{0,l}\in\mathcal{S}_k(\alpha,\beta)$ since it satisfies \eqref{eq_char_TSTO} with $\chi=\overline{\beta(0)}(S^k_\alpha k^\alpha_{0,l})$, $\psi_l=\widetilde{k}_0^\beta$ and $\psi_j=0$ for $j\not=l$. It follows that, by Proposition \ref{lem_cong_U},
$${k}_0^\beta\otimes \widetilde{k}^\alpha_{0,l}=C_{\beta}(\widetilde{k}_0^\beta\otimes {k}^\alpha_{0,l})C_{\alpha}\in\mathcal{S}_k(\alpha,\beta).$$
\end{proof}

\begin{corollary}
For each $l\in\{0,1,\ldots,k-1\}$,
\begin{itemize}
\item[(a)] $\widetilde{k}_0^\beta\otimes k^\alpha_{0,l}=U_{\varphi}^{\alpha,\beta}$ with $\varphi=W_k^*\beta\cdot l! \overline{z}^{l+k}$,
\item[(b)] ${k}_0^\beta\otimes \widetilde{k}^\alpha_{0,l}=U_{\psi}^{\alpha,\beta}$ with $\psi=\overline{\alpha}\cdot l! {z}^{l+1}$.
\end{itemize}
\end{corollary}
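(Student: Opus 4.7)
The plan is to exploit the data already extracted in the proof of Proposition \ref{P13}, where the identity \eqref{eq_char_TSTO} was verified for $U = \widetilde{k}_0^\beta \otimes k^\alpha_{0,l}$ with the explicit choices $\chi = \overline{\beta(0)}\, S_\alpha^k k^\alpha_{0,l}$, $\psi_l = \widetilde{k}_0^\beta$, and $\psi_j = 0$ for $j \neq l$. Feeding these into Corollary \ref{symbol} immediately produces one legitimate symbol, namely
$$\varphi = \beta(0)\,\overline{S_\alpha^k k^\alpha_{0,l}} + \widetilde{k}_0^\beta(z^k)\,l!\,\overline{z}^l.$$
The task in (a) is then to reduce this symbol to the claimed $\varphi_0 := W_k^*\beta \cdot l!\,\overline{z}^{l+k}$, modulo a piece that annihilates via Proposition \ref{prop_zero_U}.

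For the reduction I would first note that $S_\alpha = P_\alpha M_z|_{K_\alpha}$ iterates to $S_\alpha^k = P_\alpha M_{z^k}|_{K_\alpha}$ (the $\alpha H^2$ correction is absorbed by $P_\alpha$ at each stage), so $S_\alpha^k k^\alpha_{0,l} = P_\alpha(l!\,z^{l+k})$. Next, the identity $\widetilde{k}_0^\beta(z^k) = (W_k^*\beta(z) - \beta(0))/z^k$, which follows from Lemma \ref{lem_propert_Wk}(a), expands the second term of $\varphi$ as $l!\,W_k^*\beta \cdot \overline{z}^{l+k} - l!\,\beta(0)\,\overline{z}^{l+k}$. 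Combining these two simplifications,
$$\varphi - \varphi_0 = \beta(0)\bigl(\overline{P_\alpha(l!\,z^{l+k})} - l!\,\overline{z}^{l+k}\bigr) = -\beta(0)\,\overline{(I - P_\alpha)(l!\,z^{l+k})} \in \overline{\alpha H^2},$$
so Proposition \ref{prop_zero_U} gives $U_{\varphi - \varphi_0}^{\alpha,\beta} = 0$, which establishes (a).

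For (b) the shortcut is via conjugation. Since $C_\beta \widetilde{k}_0^\beta = k_0^\beta$ and $C_\alpha k^\alpha_{0,l} = \widetilde{k}^\alpha_{0,l}$, we have $k_0^\beta \otimes \widetilde{k}^\alpha_{0,l} = C_\beta(\widetilde{k}_0^\beta \otimes k^\alpha_{0,l})C_\alpha$, an identity already used at the end of the proof of Proposition \ref{P13}. Applying Proposition \ref{lem_cong_U} to the symbol $\varphi_0$ from (a), the transported symbol is
$$\psi = \overline{z^{k-1}\alpha\varphi_0}\cdot W_k^*\beta = \overline{z^{k-1}}\,\overline{\alpha}\,\overline{W_k^*\beta}\cdot l!\,z^{l+k}\cdot W_k^*\beta = \overline{\alpha}\,l!\,z^{l+1},$$
the two factors $W_k^*\beta$ and $\overline{W_k^*\beta}$ cancelling because $W_k^*\beta$ is inner and hence unimodular a.e.\ on $\mathbb{T}$.

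I do not anticipate any serious obstacle. The one checkpoint requiring care is the computation of $S_\alpha^k k^\alpha_{0,l}$ together with the verification that $\varphi - \varphi_0$ lands in $\overline{\alpha H^2}$; once that is established, both formulas follow almost mechanically from already-proved results.
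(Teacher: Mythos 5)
Your proposal is correct and follows essentially the same route as the paper: extract $\chi=\overline{\beta(0)}\,S_\alpha^k k^\alpha_{0,l}$, $\psi_l=\widetilde{k}_0^\beta$ from Proposition \ref{P13}, feed them into Corollary \ref{symbol}, show $S_\alpha^k k_{0,l}^\alpha=P_\alpha(l!\,z^{l+k})$, observe the symbol difference lies in $\overline{\alpha H^2}$, and obtain (b) by conjugating with $C_\beta,C_\alpha$ via Proposition \ref{lem_cong_U}. The only (immaterial) variation is that you derive $S_\alpha^k k_{0,l}^\alpha=P_\alpha(l!\,z^{l+k})$ from the operator identity $S_\alpha^k=P_\alpha M_{z^k}|_{K_\alpha}$, whereas the paper verifies it by pairing against an arbitrary $g\in K_\alpha$.
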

\begin{proof}
	By Corollary \ref{symbol} and the proof of Proposition \ref{P13}, $\widetilde{k}_0^\beta\otimes k^\alpha_{0,l}=U_{\varphi_1}^{\alpha,\beta}$ with
	$$\varphi_1=\beta(0)\overline{\left(S_{\alpha}^kk_{0,l}^{\alpha}\right)}+\widetilde{k}_0^{\beta}(z^k)l!\overline{z}^{l}.$$
	Since for each $g\in K_{\alpha}$,
	\begin{displaymath}
	\langle S_{\alpha}^kk_{0,l}^{\alpha},g\rangle=	\langle P_{\alpha}(l!z^l),(S_{\alpha}^*)^kg\rangle=\langle l!z^l,T_{\overline{z}^k}g\rangle=\langle l!z^{l+k},g\rangle,
	\end{displaymath}
	we have $S_{\alpha}^kk_{0,l}^{\alpha}=P_{\alpha}( l!z^{l+k})$ and
	$$\varphi_1=\beta(0)\overline{P_{\alpha}( l!z^{l+k}) }+({\beta}(z^k)-\beta(0))l!\overline{z}^{l+k}=W_k^*\beta\cdot l!\overline{z}^{l+k}-\beta(0)\overline{\left( l!{z}^{l+k}-P_{\alpha}( l!z^{l+k})\right)} .$$
	Since $\varphi_1-\varphi\in\overline{\alpha H^2}$ we get $\widetilde{k}_0^\beta\otimes k^\alpha_{0,l}=U_{\varphi_1}^{\alpha,\beta}=U_{\varphi}^{\alpha,\beta}$, that is, (a) holds.
	
	Now (b) follows from Proposition \ref{lem_cong_U}.	
	\end{proof}

\end{document}